\newtheorem{theorem}{Theorem}[section]
\newtheorem{lemma}[theorem]{Lemma}
\newtheorem{prop}{Proposition}
\theoremstyle{definition}
\newtheorem{definition}[theorem]{Definition}
\newtheorem{remark}{Remark}
\newtheorem{example}{Example}
\title[Symplectic integration using Clebsch variables] 
      {Symplectic integration of PDEs \\ using Clebsch variables}
\author[Robert I McLachlan and Christian Offen and Benjamin K Tapley]{}
\subjclass{Primary: 37M15, 65P10; Secondary: 37K05, 35Q31, 37J15, 53D20}
 \keywords{Symplectic integration, Lie-Poisson system, Burgers' equation, Euler's equation}
 \email{r.mclachlan@massey.ac.nz}
 \email{c.offen@massey.ac.nz}
 \email{benjamin.tapley@ntnu.no}
\thanks{This research was supported by the Marsden Fund of the Royal Society Te Ap\={a}rangi.}
\thanks{$^*$ corresponding author. \href{mailto:c.offen@massey.ac.nz}{c.offen@massey.ac.nz}}
\def\d{\mathrm{d}}
\def\p{\partial }
\def\D{\mathrm{D}}
\def\id{\mathrm{id}}
\def\e{\epsilon}
\def\Z{\mathbb{Z}}
\def\R{\mathbb{R}}
\DeclareRobustCommand\marksymbol[2]{\tikz[#2,scale=1.2]\pgfuseplotmark{#1};}
\DeclareRobustCommand{\CollLine}{\raisebox{2pt}{\tikz{\draw[-.,red,dash pattern={on 7pt off 2pt on 3pt off 2pt},line width = 1.5pt](0,0) -- (9mm,0);}}}
\DeclareRobustCommand{\RefLine}{\raisebox{2pt}{\tikz{\draw[-,black,dash pattern={on 7pt off 2pt},line width = 1.5pt](0,0) -- (9mm,0);}}}
\DeclareRobustCommand{\ConvLine}{\raisebox{2pt}{\tikz{\draw[-,blue,line width = 1.5pt](0,0) -- (9mm,0);}}}
\begin{document}
\maketitle

\centerline{\scshape Robert I McLachlan and Christian Offen$^*$}
\medskip
{\footnotesize
 \centerline{Institute of Fundamental Sciences }
   \centerline{Massey University}
   \centerline{Private Bag 11 222, Palmerston North, 4442, New Zealand}
} 

\medskip

\centerline{\scshape Benjamin K Tapley}
\medskip
{\footnotesize
 \centerline{ Department of Mathematical Sciences}
 \centerline{Norwegian University of Science and Technology}
   \centerline{Sentralbygg 2, Gl{\o}shaugen, Norway}
}

\bigskip

 \centerline{(Communicated by the associate editor name)}

\begin{abstract}

Many PDEs (Burgers' equation, KdV, Camassa-Holm, Euler's fluid equations,\textellipsis) can be formulated as infinite-dimensional Lie-Poisson systems. These are Hamiltonian systems on manifolds equipped with Poisson brackets.
The Poisson structure is connected to conservation properties and other geometric features of solutions to the PDE and, therefore, of great interest for numerical integration.
For the example of Burgers' equations and related PDEs we use Clebsch variables to lift the original system to a collective Hamiltonian system on a symplectic manifold whose structure is related to the original Lie-Poisson structure. On the collective Hamiltonian system a symplectic integrator can be applied.
Our numerical examples show excellent conservation properties and indicate that the disadvantage of an increased phase-space dimension can be outweighed by the advantage of symplectic integration.

\end{abstract}

\section{Motivation}




Partial differential equations (PDEs) often exhibit interesting structure preserving properties, for example conserved quantities. In many examples, a deeper understanding of the structures can be achieved by viewing the PDE as the Lie-Poisson equation associated to an infinite-dimensional Lie group.
This means solutions to the PDE correspond to motions of a Hamiltonian system defined on the dual of the Lie-algebra of a Fr\'echet Lie-group.
Examples include Euler's equations for incompressible fluids, Burgers' equation, equations in magnetohydrodynamics, the Korteweg-de Vries equation, the superconductivity equation, charged ideal fluid equations, the Camassa-Holm equation and the Hunter-Saxton equation \cite{Vizman08}.
Conserved quantities turn out to be related to the fact that the Hamiltonian flow preserves the Lie-Poisson bracket. 
%
This makes Lie-Poisson structures interesting for structure preserving integration. We will give a brief review of Hamiltonian systems on Poisson manifolds in section \ref{sec:Intro}.

An approach to construct Lie-Poisson integrators, which works universally in the finite-dimensional setting, is to translate the Lie-Poisson system on a Lie-group $G$ to a Hamiltonian system on the tangent bundle $TG$ with a $G$-invariant Lagrangian. Using a variational integrator one obtains a Poisson-integrator for the original system \cite{DescreteEulerLiePoisson}. These integrators, however, can be extremely complicated \cite[p. 1526]{CollectiveIntegrators}. Moreover, the fact that exponential maps do not constitute local diffeomorphisms for infinite-dimensional manifolds restricts the approach to a finite-dimensional setting. 
Other approaches for energy preserving integration of finite dimensional Poisson systems with good preservation properties, e.g.\ preservation of linear symmetries or (quadratic) Casimirs, include \cite{ BRUGNANO20123890,Cohen2011,DiscreteGradientsPresIntegrals}. For a recent review article on Lie-Poisson integrators we refer to \cite{LiePoissonReview}.

Let us return to the infinite-dimensional setting. For numerical computations a PDE needs to be discretised in space. In the Lie-Poisson setting this corresponds to an approximation of the dual of a Lie algebra $\mathfrak g^\ast$ by a finite-dimensional space. The space $\mathfrak g^\ast$ typically corresponds to some space of $\R^k$-valued functions defined on a manifold. The most natural way of discretising $\mathfrak g^\ast$ is to introduce a grid on the manifold and identify a function with the values it takes over the grid. In this way we naturally obtain a finite-dimensional approximation of $\mathfrak g^\ast$. However, the approximation does not inherit a Poisson structure in a natural way, as we will see in the example of the Burgers' equation (remark \ref{rem:noheritage}). Therefore, finding a spatial discretisation with good structure preserving properties is a challenge.

Lie-Poisson systems $(\mathfrak g^\ast,\{,\},H)$ can be realised as collective Hamiltonian systems $(M,\Omega,H\circ J)$ on symplectic manifolds, where $J \colon M \to \mathfrak g^\ast$ is a Poisson map. The flow of $(M,\Omega,H\circ J)$ maps fibres of $J$ to fibres of $J$ and is symplectic. Therefore, it decends to a Poisson map on the original system $(\mathfrak g^\ast,\{,\},H)$.
Since the Hamiltonian vector field to $H\circ J$ on $(M,\Omega)$ is $J$-related to the Hamiltonian vector field to $H$ on $(\mathfrak g^\ast,\{,\})$, motions of $(M,\Omega,H\circ J)$ decend to motions of $(\mathfrak g^\ast,\{,\},H)$. 

The reason to consider a collective system for numerical integrations rather than the Lie-Poisson system directly is that the symplectic structure can easily be preserved under spacial discretisations and widely applicable, efficient symplectic integrators are available \cite{GeomIntegration}. The challenge of integrating $(\mathfrak g^\ast,\{,\},H)$ in a structure preserving way thus shifts to finding a realisation, i.e.\ $(M,\omega)$ and $J \colon M \to \mathfrak g^\ast$, such that all initial conditions of interest lie in the image of $J$ and such that the system $(M,\Omega,H\circ J)$ is practical to work with.

A practical choice for a realisation is where $J$ is a {\em Clebsch map} \cite{MarsdenCoadjointOrbits}:
let $X$ be a Riemannian manifold and let $M = T^\ast \mathcal C^\infty(X,\R^k) \cong C^\infty(X,\R^k) \times C^\infty(X,\R^k)^\ast$, where $ C^\infty(X,\R^k)^\ast$ is identified with $C^\infty(X,\R^k)$ via the $L^2$ pairing. The vector space $M$ is equipped with the symplectic form
\[
\Omega((u_1,u_2),(v_1,v_2))
= \int_X (\langle u_1, v_2\rangle_{\R^k}  - \langle u_2, v_1\rangle_{\R^k} )\, \d \mathrm{vol}_X,
\]
where $\langle .,.\rangle_{\R^k}$ denotes the scalar product in $\R^k$.
For an element $(f,g)\in M$ we denote the post-composition of $f$ and $g$ by the projection map to the $j^{\mathrm{th}}$ component of $\R^k$ by $q^j(f)$ and $p_j(g)$, respectively. In other words, $q^1,\ldots,q^k,p_1,\ldots,p_k$ are maps $M \to \mathcal C^\infty(X,\R)$ such that for $x \in X$
\[
(f(x),g(x)) = \Big(\big(q^1(f)(x),\ldots,q^k(f)(x)\big),\big(p_1(g)(x),\ldots,p_k(g)(x)\big)\Big).
\]
Identifying tangent spaces of the vector space $M$ with itself, we may write $\Omega$ as
\[
\Omega 
= \int_X \left(\sum_{j=1}^k \d q^j \wedge \d p_j \right) \d \mathrm{vol}_X
= \int_X \langle \d {q} \wedge \d {p} \rangle_{\R^k} \, \d \mathrm{vol}_X,
\]
where $q = (q^1,\ldots,q^k)$ and $p = (p_1,\ldots,p_k)$\footnote{The notation is natural when considering $M$ as a Fr\'echet manifold over $C^\infty(X,\R)$ or $C^\infty(X,\R^k)$ with coordinates $(q^1,\ldots,q^k,p_1,\ldots,p_k)$ or $(q,p)$, respectively.}.
If $J\colon M \to \mathfrak g^\ast$ is a realisation of a Lie Poisson system $(\mathfrak g^\ast,\{,\})$, then $J$ is called a {\em Clebsch map} and $(q,p)$ are called {\em Clebsch variables}. In Clebsch variables Hamilton's equations for $\bar H =H\circ J \colon M \to \R$ are in canonical form, i.e.\
\[
q_t = \frac{\delta \bar H}{\delta p}, \qquad p_t = -\frac{\delta \bar H}{\delta q},
\]
where $\frac{\delta \bar H}{\delta q}$ and $\frac{\delta \bar H}{\delta p}$ are variational derivatives.
The reason why Clebsch variables are a natural choice of coordinates for a structure preserving setting is that if $X$ is discretised using a mesh then the integral in the expression for $\Omega$ naturally becomes a (weighted) sum over all mesh points and Hamilton's equations for the discretisation of the collective system $(M,\Omega,H\circ J)$ are in (a scaled version of the) canonical form. This means the system can be integrated using a symplectic integrator like, for instance, the midpoint rule.
The setting is summarised in table \ref{table:overview}.

\begin{table}
\centering
\def\arraystretch{1.5}
\begin{tabular}{ p{6cm} | p{6cm} } 
Continuous system & Spatially discretised system \\
 \hline\hline 
Collective Hamiltonian system on an infinite-dimensional symplectic vector space in Clebsch variables
\begin{center}
$q_t = \frac{\delta \bar H}{\delta p}, \quad p_t = -\frac{\delta  \bar H}{\delta q}.$
\end{center}
Exact solutions preserve the symplectic structure, the Hamiltonian $\bar H=H\circ J$, all quantities related to the Casimirs of the original PDE and the fibres of the Clebsch map $J(q,p)=u$.
& Canonical Hamiltonian ODEs in $2N$ variables
\begin{center}
$\hat q_t = \nabla_{\hat p} \hat {\bar H}, \quad \hat p_t = - \nabla_{\hat q} \hat {\bar H}.$
\end{center}
The exact flow preserves the symplectic structure and the Hamiltonian $\hat {\bar H}$.

Time-integration with the midpoint rule is symplectic.
\\
\hline
Original PDE, interpreted as a Lie-Poisson equation
\begin{center}
$u_t = \mathrm{ad}^\ast_{\frac {\delta H}{\delta u}}u.$
\end{center}
Exact solutions preserve the Poisson structure, the Hamiltonian $H$ and all Casimirs.
 & Non-Hamiltonian ODEs in $N$ variables 
 \begin{center}
$\hat u_t = K(\hat u) \nabla_{\hat u} \hat H, \qquad K^T=-K.$
\end{center}
Exact solutions conserve $\hat H$.

Time-integration with the midpoint rule is {\em not} symplectic.
\\
 \hline
\end{tabular}
\caption{Overview of the setting.}
\label{table:overview}
\end{table}

The symplectic system in Clebsch variables has, after spatial discretisation, twice as many variables as the discretisation of the PDE in the original variables.
An increase in the amount of variables needs some justification because it does not only lead to more work per integration step but, thinking of multi-step methods versus one-step methods, can also lead to worse stability behaviour \cite[XV]{GeomIntegration}. Moreover, integrating a lifted, symplectic system with a symplectic integrator instead of the original system with a non-symplectic integrator is not necessarily of any advantage. If, for instance, we integrate the Hamiltonian system
\begin{align*}
\dot u &= F(u) \qquad \;\;    \,= \phantom{-}\nabla_p\langle F(u),p\rangle \\
\dot p &= - \D F(u)^T p   = -\nabla_u\langle F(u),p\rangle
\end{align*}
rather than the system $\dot u = F(u)$ directly then preserving the symplectic structure in a numerical computation does not have any effect:
in this example the symplectic structure is artificially introduced and not related to the original system. This illustrates that using symplectic integrators is not an end in itself.
It is the presence of a Poisson structure and its interplay with the symplecticity of the collective system which can justify doubling the amount of variables as our numerical examples will indicate.

Let us provide examples for the application of Clebsch variables.
Euler's equation in hydrodynamics for an ideal incompressible fluid with velocity $u$ and pressure $\rho$ on a 3-dimensional compact, Riemannian manifold $X$ with boundary $\partial X$ or a region $X\subset\R^3$ are given as
\[
u_t + u \cdot \nabla u = -\nabla \rho, \qquad \mathrm{div}\, u = 0, \qquad u|_{\partial X} \text{ is parallel to $\partial X$.}
\]
Elements in the dual of the Lie-algebra $\chi^\ast_{\mathrm{vol}}$ to the Fr\'echet Lie-group of volume preserving diffeomorphisms $\mathcal{D}_{\mathrm{vol}}$ can be considered as 2-forms on $X$. Using $\nabla \times u \,\widehat{=}\, \d u^{\flat}$ Euler's equations correspond to motions on the Lie-Poisson system to $\mathcal{D}_{\mathrm{vol}}$ with Hamiltonian $H(\sigma) = \frac 12 \int_X \langle \Delta^{-1} \sigma,\sigma \rangle \d \mathrm{vol}_X$, where $\Delta$ is the Laplace-DeRham operator and $\langle,\rangle$ the metric pairing of 2-forms \cite{MarsdenCoadjointOrbits}.

A Clebsch map $J\colon M\to \chi^\ast_{\mathrm{vol}}$ can be obtained as the momentum map of the cotangent lifted action of the action $(\eta, f) \mapsto f \circ \eta^{-1}$ of $\mathcal{D}_{\mathrm{vol}}$ on $\mathcal{C}^\infty(X,\R)$.
However, $J$ is not surjective and flows with non-zero hydrodynamical helicity cannot be modelled. To overcome this issue one can consider $M = \mathcal C^\infty(X,S^2)$, where $S^2$ is the 2-sphere. The symplectic form $\sigma_{S^2}$ on the sphere induces the symplectic form $\Omega = \int_X \sigma_{S^2} \d \mathrm{vol}_X$ on $M$. We can define $J \colon M \to \chi^\ast_{\mathrm{vol}}$ as $J(s) = s^\ast \sigma_{S^2}$, where $s^\ast \sigma_{S^2}$ denotes the pull-back of $\sigma_{S^2}$ to a 2-form on $X$ which can be interpreted as an element in $\chi^\ast_{\mathrm{vol}}$. The map $J$ is called a {\em spherical Clebsch map} and initial conditions with non-zero helicity are admissible. However, the helicity remains quantised \cite{TopoMeaningSphericalClebsch}.
Spherical Clebsch maps have been used for computational purposes in \cite{chern2016schrodinger}: after a discretisation of the domain $X$, solutions to the (regularised) hydrodynamical equations are approximated by integrating the corresponding set of ODEs on the product $\Pi_{\mathrm{mesh}(X)} S^2$ while preserving the spheres using a projection method (not preserving the symplectic form $\sum_{\mathrm{mesh}(X)}\sigma_{S^2}$, though). 

In the case of Hamiltonian ODEs on (finite-dimensional) Poisson spaces $(\mathfrak g^\ast,\{,\})$, no spatial discretisation is necessary. This setting applies to the rigid-body equations, for instance \cite{Marsden78}. In the ODE setting, the authors of \cite{CollectiveIntegrators} apply symplectic integrators to the collective systems $(M,\Omega,H\circ J)$ with the property that the discrete flow preserves the fibres of $J$. Such integrators are called {\em collective integrators}. Their flow descends to a Poisson map on the original system $(\mathfrak g^\ast,\{,\},H)$ such that one obtains a Poisson integrator for $(\mathfrak g^\ast,\{,\},H)$.

In this paper, we show how the collective integrator idea can be used in the infinite-dimensional setting, i.e.\ for Lie-Poisson systems to infinite-dimensional Lie-groups.
In particular, we will consider the inviscid Burgers' equation
\[
u_t + u u_x = 0
\]
with $u(t,.) \in \mathcal C^\infty(S^1,\R)$. The $L^2$-norm of $u(t,.)$ as well as the quantity \[\int_{S^1} \sqrt{|u(t,.)|}\d x\] are conserved quantities. They constitute the Hamiltonian and Casimirs of the Lie-Poisson formulation of the problem. Setting $u = q_x p$ we obtain the following set of PDEs 
\[
q_t = -\frac 13 q^2_xp,
\qquad
p_t = -\frac 13 (q_xp^2)_x
\]
with $q(t,\cdot) \in \mathcal C^\infty(S^1,S^1)$ and $p(t,\cdot) \in \mathcal C^\infty(S^1,\R)$ which is the collective system.
The variables $q,p$ may be regarded as Clebsch variables (right in the middle between classical and spherical Clebsch variables).

We will also experiment with the following more complicated PDE which fits into the same setting as the inviscid Burgers' equation.
\begin{equation*}
u_t = 3uu_x - \frac{9}{4}u^2u_x - u_xu_{xx} - 3u_x^2u_{xx} - 2uu_{xxx} - 2uu_xu_{xxx} - 6uu_{xx}^2
\end{equation*}
It has the conserved quantity $H(u)=\int_{S^1} (u^2+u_x^2-1/2 u^3+u_x^3) \d x$ as well as $\int_{S^1} \sqrt{|u|}\d x$ in time. In Clebsch variables we have 
\begin{align*}
q_t &= \frac{\delta \bar H}{\delta p} \;\;\, = q_x\Big(q_xp-\frac 34 (q_xp)^2-((q_xp)_x+\frac 32 (q_xp)_x^2)_x\Big)\\
p_t &= -\frac{\delta \bar H}{\delta p} = p \Big( \frac 32 (q_xp)^2-q_xp+\big((q_xp)_x+\frac 32 (q_xp)_x^2\big)_x \Big)_x.
\end{align*}

The PDEs are discretised in space by introducing a periodic grid on $S^1$ and replacing the integral in $H$ by a sum. In this way we obtain a system of Hamiltonian ODEs in canonical form.

Integration using the symplectic midpoint rule yields an integrator with excellent structure preserving properties like bounded energy and Casimir errors, although it does not preserve the fibres of $J$ and therefore does not descend to a Poisson integrator. 
The good behaviour is linked to the symplecticity of the collective system which is preserved exactly by the midpoint rule. Therefore, the conservation properties survive even when the equation is perturbed within the class of Hamiltonian PDEs. This robustness can be an advantage over more traditional ways of discretising the PDE directly since these make use of structurally simple symmetries of the equation that are immediately destroyed when higher order terms are introduced. Our numerical experiments indicate that the advantage of symplectic integration can outweigh the disadvantage of doubling the variables from $u$ to $(q,p)$.

\section{Introduction}\label{sec:Intro}

Let us briefly review the setting of Hamiltonian systems on Poisson manifolds. For details we refer to \cite{Marsden99}.

\begin{definition}[Poisson manifold and Poisson bracket]
A {\em Poisson manifold} $P$ is a smooth manifold together with an $\R$-bilinear map 
\[\{\cdot,\cdot\}  \colon \mathcal C^\infty(P) \times  \mathcal C^\infty(P) \to  \mathcal C^\infty(P)\]
satisfying
\begin{itemize}
\item
$\{f,g\}=-\{g,f\}$ (skew-symmetry),
\item
$\{f,\{g,h\}\}+\{g,\{h,f\}\}+\{h,\{f,g\}\}=0$ (Jacobi identity),
\item
$\{fg,h\}=f\{g,h\}+g\{f,h\}$ (Leibniz's rule).
\end{itemize}
The map $\{\cdot,\cdot\}$ is called the {\em Poisson bracket}.
\end{definition}

\begin{example}\label{ex:LiePoissonStr}
If $G$ is a (Fr\'echet-) Lie-group with Lie-algebra $\mathfrak g$ and dual $\mathfrak g^\ast$ then
\begin{equation}\label{eq:LiePoissonBracket}
\{f,g\}(w) = \left\langle w, \left[\frac{\delta f}{\delta w},\frac{\delta g}{\delta w}\right]\right\rangle, \qquad w\in \mathfrak g^\ast, \, f,g \in \mathcal C^\infty(\mathfrak g^\ast)
\end{equation}
is a (Lie-) Poisson bracket on $\mathfrak g^\ast$, where $\langle\cdot,\cdot\rangle$ denotes the duality pairing of $\mathfrak g^\ast$ and $\mathfrak g$, $[\cdot,\cdot]$ denotes the Lie bracket on $\mathfrak g$ and $\frac{\delta f}{\delta w} \in \mathfrak g$ is defined by
\[
\forall v \in \mathfrak g^\ast : \quad \D f|_{w} (v) = \left\langle v, \frac{\delta f}{\delta w}\right\rangle
\]
with Fr\'echet derivative $\D$.
\end{example}

\begin{definition}[Hamiltonian system and Hamiltonian motion]
A {\em Hamiltonian system} $(P, \{\cdot,\cdot\},H)$ is a Poisson manifold $(P, \{\cdot,\cdot\})$ together with a smooth map $H \colon P \to \R$. The {\em Hamiltonian vectorfield} $X_H$ to the system $(P, \{\cdot,\cdot\},H)$ is defined as the derivation $X_H = \{\cdot,H\}$. If $f \colon P \to \R$ is a smooth function, then the {\em motion of the system $(P, \{\cdot,\cdot\},H)$} in the coordinate $f$ is given by the differential equation $\dot f = \{f,H\}$, where the dot denotes a time-derivative. 
\end{definition}

\begin{example} A Hamiltonian system $(M, \omega,H)$ on a symplectic manifold $(M, \omega)$ constitutes a Hamiltonian system on the Poisson manifold $(M, \{\cdot,\cdot\})$. The Poisson bracket $\{\cdot,\cdot\}$ is defined by $\{f,g\} = \omega(X_f,X_g)$ where the vector fields $X_f$ and $X_g$ are defined by $\d f = \omega(X_f,\cdot)$ and $\d g = \omega(X_g,\cdot)$. If $M$ is $2n$-dimensional with local coordinates $q^1,\ldots,q^n,p_1,\ldots,p_n$ and $\omega = \sum_{j=1}^{n}\d q^j \wedge \d p_j$ then
\[X_H = \sum_{j=1}^n \frac{\p H}{\p p_j} \frac{\p }{\p q^j} - \frac{\p H}{\p q^j} \frac{\p }{\p p_j}.\]
The motions of the system are given by
\begin{align*}
\dot q^j &= \{q^j,H\} = X_H(q^j) = \; \;\; \frac{\p H}{\p p_j},\\
\dot p_j &= \{p_j,H\} = X_H(p_j) =  -\frac{\p H}{\p q^j}.
\end{align*}
with $j=1,\ldots,n$.
\end{example}

\begin{remark} For Hamiltonian systems on a finite-dimensional, symplectic manifold, there exist local coordinates such that the motions are given by 
\[
\dot z = S \nabla H(z),
\]
for a constant, skew-symmetric, non-degenerate matrix $S$. The analogue for finite-dimensional Poisson systems is that $S$ is allowed to be $z$ dependent and degenerate (but still skew-symmetric).
\end{remark}

\begin{remark}
Like in the symplectic case, the Hamiltonian is a conserved quantity under motions of the corresponding Hamiltonian system on a Poisson manifold.
Additionally, the Poisson structure encodes interesting geometric features of Hamiltonian motions. Casimir functions, which are real valued functions $f$ with $\{f,\cdot\}=0$ are conserved quantities (with no dependence on the Hamiltonian). While the only Casimirs are constants if the Poisson structure is induced by a symplectic structure, non-trivial Casimir functions are admissible in the Poisson case. Moreover, in a Poisson system a motion never leaves the coadjoint orbit in which it was initialised. We refer to \cite[Ch.10]{Marsden99} for proofs and more properties of Poisson manifolds. 
\end{remark}


In what follows we will present an integrator for Hamiltonian systems on the dual of the Lie-algebra of the group of diffeomorphisms on the circle. The setting covers, for example, Burgers' equation and perturbations. This shows how to apply the ideas of \cite{CollectiveIntegrators} in the infinite-dimensional setting of Hamiltonian PDEs.

\section{Lie-Poisson structure on $\mathrm{diff}(S^1)^\ast$}

Consider the Fr\'echet Lie-group $G=\mathrm{Diff}(S^1)$ of orientation preserving diffeomorphisms on the circle $S^1$. In the following we view $S^1$ as the quotient $\mathbb R / L \mathbb Z$ for $L>0$ with coordinate $x$ obtained from the universal covering $\mathbb R \to \mathbb R / L$. The Lie-algebra $\mathfrak g$ can be identified with the space of smooth vector fields on $\mathcal S^1$, where the Lie-bracket is given as the negative of the usual Lie-bracket of vector fields
\[
\left[ u \frac{\p}{\p x}, v \frac{\p}{\p x}\right] = (u_x v - v_x u) \frac{\p}{\p x}.
\]
Here, the prime denotes a derivative with respect to the coordinate $x$ on $S^1 = \mathbb R / L \mathbb Z$.\cite[Thm.43.1]{ConvenientSetting} The dual $\mathfrak g^\ast$ of the Lie algebra\footnote{which does not coincide with the functional analytic dual to $\mathfrak g$} can be identified with the quadratic differentials on the circle $\Omega^{\otimes 2}(S^1) = \{ u \cdot (\d x)^2 \, | \, u \in \mathcal C^\infty(S^1,\R)\}$. The dual pairing is given by 
\[
\left\langle u (\d x)^2, v \frac{\p}{\p x} \right \rangle
=
\int_{S^1} u(x)v(x) \d x.
\]
\cite[Prop. 2.5]{Khesin2009}
The coadjoint action of an element $\phi \in G$ on an element $ u (\d x)^2$ is given as
\[
\mathrm{Ad}^\ast_{\phi^{-1}}\left( u (\d x)^2\right)
=(u \circ \phi) \cdot \phi'^2 \cdot (\d x)^2 
= \phi^\ast \left( u (\d x)^2\right).
\]
We see that the coadjoint action on $u (\d x)^2$ preserves the zeros of $u$. The map $u$ will have an even number of zeros. Consider two consecutive zeros $a,b \in S^1$.
The integral
\[
\int_a^b \sqrt{|u(x)|} \d x
\]
is constant on the coadjoint orbit through $u (\d x)^2$ since the action corresponds to a diffeomorphic change of the integration variable in the above expression. It follows that the map $\Phi \colon \mathfrak g^\ast \to \R$ with 
\[\Phi(u (\d x)^2) = \int_{S^1}\sqrt{|u(x)|} \d x\]
is a Casimir for the Poisson structure on $\mathfrak g^\ast$. \cite{Khesin2009}
For $H \in \mathcal C^\infty(\mathfrak g^\ast, \R)$ Hamilton's equations are given as
\[
\frac{\d}{\d t} 
u(t,x) (\d x)^2
= \mathrm{ad}^\ast_{\frac{\delta H}{\delta u(t,\cdot) (\d x)^2}} \left(u(t,x) (\d x)^2 \right)
\]
or, identifying $\mathfrak g$ and $\mathfrak g^\ast$ with $\mathcal C^\infty(S^1,\R)$,
\[
u_t = \mathrm{ad}^\ast_{\frac{\delta H}{\delta u}} u.
\]
Here $\frac{\delta H}{\delta u}$ denotes the functional or variational derivative of $H$ and $\mathrm{ad}_\eta^\ast \colon \mathfrak g^\ast \to \mathfrak g^\ast$ the dual map to $\mathrm{ad}_\eta \colon \mathfrak g \to \mathfrak g$ given by
\[
\mathrm{ad}_\eta(\mu) = [\eta,\mu].
\]
\cite[Prop. 10.7.1.]{Marsden99}
\begin{lemma}\label{lemma:HamEQ}
Hamilton's equations can be rewritten as
\begin{equation}\label{eq:Hamongstar}
u_t = \left(\frac{\p}{\p x}u + u \frac{\p}{\p x}\right) \frac{\delta H}{\delta u}.
\end{equation}
\end{lemma}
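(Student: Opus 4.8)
\emph{Proof proposal.} The plan is to make the right-hand side of Hamilton's equation explicit by computing the coadjoint operator $\mathrm{ad}^\ast$ from its definition as the transpose of $\mathrm{ad}$, using the Lie bracket and the dual pairing recorded above. Write $\eta := \frac{\delta H}{\delta u}\in\mathfrak g$, identified with a function in $\mathcal C^\infty(S^1,\R)$ so that the associated vector field is $\eta\frac{\p}{\p x}$. By definition of the transpose, for every test vector field $v\frac{\p}{\p x}\in\mathfrak g$,
\[
\left\langle \mathrm{ad}^\ast_{\eta}\!\left(u\,(\d x)^2\right),\, v\frac{\p}{\p x}\right\rangle
=\left\langle u\,(\d x)^2,\, \mathrm{ad}_{\eta}\!\left(v\frac{\p}{\p x}\right)\right\rangle
=\left\langle u\,(\d x)^2,\, \left[\eta\frac{\p}{\p x},\, v\frac{\p}{\p x}\right]\right\rangle .
\]

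First I would substitute the explicit Lie bracket $\left[\eta\frac{\p}{\p x},v\frac{\p}{\p x}\right]=(\eta_x v-v_x\eta)\frac{\p}{\p x}$ and the formula for the dual pairing, turning the right-hand side into $\int_{S^1} u\,(\eta_x v-v_x\eta)\,\d x$. Next I would integrate the term $-\int_{S^1} u\eta\, v_x\,\d x$ by parts; since $S^1=\R/L\Z$ is closed there is no boundary term, and $(u\eta)_x=u_x\eta+u\eta_x$, so this term equals $\int_{S^1}(u_x\eta+u\eta_x)\,v\,\d x$. Adding the remaining contribution $\int_{S^1}u\eta_x v\,\d x$ gives $\int_{S^1}(u_x\eta+2u\eta_x)\,v\,\d x$, and since this holds for all $v$ we conclude $\mathrm{ad}^\ast_{\eta}\!\left(u\,(\d x)^2\right)=(u_x\eta+2u\eta_x)\,(\d x)^2$.

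Finally I would note the operator identity $u_x\eta+2u\eta_x=\p_x(u\eta)+u\,\p_x\eta=\left(\p_x u+u\,\p_x\right)\eta$, where $\p_x u$ is read as the operator $\phi\mapsto\p_x(u\phi)$. Substituting $\eta=\frac{\delta H}{\delta u}$ into Hamilton's equation $u_t\,(\d x)^2=\mathrm{ad}^\ast_{\delta H/\delta u}\!\left(u\,(\d x)^2\right)$ and cancelling the common factor $(\d x)^2$ then yields \eqref{eq:Hamongstar}.

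I do not expect a genuine obstacle: the computation is short and elementary. The only points requiring care are the bookkeeping of the identifications of $\mathfrak g$ and $\mathfrak g^\ast$ with $\mathcal C^\infty(S^1,\R)$ (so that the factors $\frac{\p}{\p x}$ and $(\d x)^2$ are consistently suppressed), the sign convention for the Lie bracket (the negative of the usual bracket of vector fields, as fixed above), and the vanishing of the boundary term in the integration by parts, which holds precisely because $S^1$ has no boundary.
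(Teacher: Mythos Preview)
Your proposal is correct and follows essentially the same route as the paper: pair $\mathrm{ad}^\ast_{\eta}u$ with a test vector field $v$, substitute the explicit Lie bracket $[\eta,v]=\eta_x v - v_x\eta$, and integrate the $v_x$-term by parts on $S^1$ to obtain $u\eta_x+(u\eta)_x=\left(\p_x u+u\,\p_x\right)\eta$. The paper simply suppresses the $\frac{\p}{\p x}$ and $(\d x)^2$ decorations from the outset and stops at the form $u\eta_x+(u\eta)_x$ rather than reexpanding it as $u_x\eta+2u\eta_x$, but the argument is the same.
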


\begin{proof}
Let $v \in \mathfrak g$, $u \in \mathfrak g^\ast$ (both identified with $\mathcal C^\infty(S^1,\R)$). Denoting the dual pairing between $\mathfrak g$ and $\mathfrak g^\ast$ by $\langle, \rangle$, we obtain
\begin{align*}
\left \langle \mathrm{ad}^\ast_{\frac{\delta H}{\delta u}} u , v \right \rangle
&= \left \langle  u , \mathrm{ad}_{\frac{\delta H}{\delta u}} v \right \rangle
= \left \langle  u , \left[ \frac{\delta H}{\delta u} , v\right] \right \rangle
= \left \langle  u , \left(\frac{\delta H}{\delta u}\right)_x\cdot v - \left(\frac{\delta H}{\delta u}\right)\cdot v_x   \right \rangle\\
&= \left \langle  u \cdot \left(\frac{\delta H}{\delta u}\right)_x ,  v \right \rangle
- \left \langle u \cdot  \left(\frac{\delta H}{\delta u}\right) , v_x   \right \rangle\\
&= \left \langle  u \cdot \left(\frac{\delta H}{\delta u}\right)_x ,  v \right \rangle
+  \left \langle \left( u \cdot  \left(\frac{\delta H}{\delta u}\right)\right)_x , v   \right \rangle,
\end{align*}
whereas the last equation follows using integration by parts. 
\end{proof}

\begin{example}\label{ex:Hong}
On $\mathfrak g^\ast$ consider the Hamiltonian
\[H(u) = \int_{S^1} \mathcal H(u^{\mathrm{jet}}(x)) \d x\]
with $\mathcal H \colon \R^{K+1} \to \R$ and the $K$-jet of the map $u$
\begin{align*}
u^{\mathrm{jet}}(x)
&:= (u(x),u_x(x),u_{x^2}(x),\ldots,u_{x^K}(x))\\
&:= \left( u(x),\left.\frac{\p u}{\p x}\right|_x, \left.\frac{\p^2 u}{\p x^2}\right|_x,\ldots,\left.\frac{\p^K u}{\p x^K}\right|_x\right).
\end{align*}
By lemma \ref{lemma:HamEQ}, Hamilton's equations are given as
\[
u_t = \left(\frac{\p}{\p x}u + u \frac{\p}{\p x}\right) \sum_{j=0}^{K} (-1)^j\frac{\p}{\p x^j} \left(\frac{\p \mathcal H}{\p u_{x^j}}(u^{\mathrm{jet}}) \right).
\]
For $\mathcal H(u)=-\frac 16 u^2$ we obtain the inviscid Burgers' equation $u_t+uu_x = 0$. 
\end{example}

\begin{remark}\label{rem:noheritage}
Using formula \eqref{eq:LiePoissonBracket} from example \ref{ex:LiePoissonStr} identifying $\mathfrak g \cong \mathcal C^\infty(S^1,\R)$ and $\mathfrak g^\ast \cong \mathcal C^\infty(S^1,\R)$, the Lie-Poisson bracket is given by
\[
\{F,G\}(u)
= \int_{S^1} \left( \frac{\d}{\d x} \left(\frac{\delta F}{\delta u}\right) \frac{\delta G}{\delta u}
-  \frac{\delta F}{\delta u}\frac{\d}{\d x} \left(\frac{\delta G}{\delta u}\right)\right)f \;\d x,
\]
where $\frac{\delta F}{\delta u}$ denotes the functional or variational derivative of $F$ at $u$. Discretising $S^1 \cong \R/\Z $ using a (periodic) grid with $N$ grid-points, we naturally obtain $\R^N$ as a discrete analog of $\mathfrak g^\ast$. However, the above Poisson structure does not pass naturally to $\R^N$. 
\end{remark}
\section{The collective system}

Let us construct a realisation $J\colon M \to \mathfrak g^\ast$ where $M$ is a symplectic vector space.
Consider the left-action of $g \in G=\mathrm{Diff}(S^1)$ on $q \in Q = \mathcal C^\infty(S^1,S^1)$ defined by $g . q = q \circ g^{-1}$.

\begin{lemma} The vector field $\hat v$ generated by the infinitesimal action of an element $v \in \mathfrak g \cong \mathfrak{X}(S^1)$ on $Q$ is given by the Lie-derivative $-\mathcal L_v$. Interpreting $v$ as an element in $\mathcal C^\infty(S^1,\R)$, this becomes $\hat v_q = - v \cdot q' \in \mathcal C^\infty(S^1,\R) \cong T_qQ$.
\end{lemma}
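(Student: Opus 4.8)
The plan is to compute the infinitesimal generator of the action $g.q = q\circ g^{-1}$ directly from its definition, namely $\hat v_q = \frac{\d}{\d t}\big|_{t=0}\big(\exp(tv).q\big)$, and then to translate the resulting expression into the identification $T_qQ\cong \mathcal C^\infty(S^1,\R)$. First I would recall that if $\phi_t$ denotes the flow of the vector field $v\in\mathfrak X(S^1)$ on the circle (so $\phi_0=\id$ and $\frac{\d}{\d t}\big|_{t=0}\phi_t = v$), then $\exp(tv)=\phi_t$ in $G=\mathrm{Diff}(S^1)$, and hence $\exp(tv).q = q\circ\phi_t^{-1} = q\circ\phi_{-t}$. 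Differentiating in $t$ at $t=0$ gives, by the chain rule, $\hat v_q = \frac{\d}{\d t}\big|_{t=0}\, q\circ\phi_{-t} = -\,\D q\cdot v = -\mathcal L_v q$, where $\mathcal L_v$ is the Lie derivative along $v$ acting on the $S^1$-valued map $q$ (viewed, say, through a local lift to $\mathbb R$, since the circle $Q$-target only matters up to the covering and the derivative is well defined on $T_{q}Q$).

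Second I would make the identification $T_qQ\cong\mathcal C^\infty(S^1,\R)$ explicit: a tangent vector to $Q=\mathcal C^\infty(S^1,S^1)$ at $q$ is a section of $q^\ast TS^1$, and since $TS^1$ is trivial this is just a smooth real-valued function on $S^1$. Under this identification the Lie derivative $\mathcal L_v q$ of the map $q$ along the vector field $v\,\frac{\p}{\p x}$ on the source $S^1$ is computed by pulling back the tangent along the flow, which in the local coordinate $x$ is simply $x\mapsto v(x)\,q'(x)$ — i.e. $\mathcal L_v q = v\cdot q'$. Here $q'$ denotes the derivative of (a lift of) $q$ with respect to the coordinate $x$, consistent with the prime notation used earlier in the section. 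Combining this with the sign from the first step yields $\hat v_q = -\mathcal L_v q = -\,v\cdot q'$, as claimed.

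The only genuinely delicate point is the sign and the correct interpretation of ``$\mathcal L_v$'' when the target of $q$ is $S^1$ rather than $\mathbb R$: one must check that $\mathcal L_v$ acting on an $S^1$-valued map is well defined and agrees with $v\cdot q'$ computed via any local lift, and that the minus sign genuinely comes from the inverse $g^{-1}$ in the left action $g.q = q\circ g^{-1}$ (a right action $g.q = q\circ g$ would give $+\mathcal L_v q$). Everything else is a routine application of the chain rule and of the standard formula $\exp(tv)=\phi_t$ for the flow exponential on $\mathrm{Diff}(S^1)$; I would simply remark that this exponential is well defined on the circle even though $\mathrm{Diff}(S^1)$ is only a Fréchet Lie group, since the flow of a smooth vector field on a compact manifold exists for all time. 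I expect the write-up to be short: state $\exp(tv).q = q\circ\phi_{-t}$, differentiate, identify $T_qQ$, and read off $\hat v_q = -v\cdot q'$.
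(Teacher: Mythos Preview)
Your proposal is correct and follows essentially the same route as the paper: both differentiate the curve $t\mapsto q\circ g_t^{-1}$ at $t=0$ and use the chain rule to obtain $-v\cdot q'$. The only cosmetic difference is that you take $g_t=\phi_t=\exp(tv)$ and invoke the flow identity $\phi_t^{-1}=\phi_{-t}$, whereas the paper works with an arbitrary curve $g_t$ through $\id$ with $\dot g_0=v$ and extracts $\frac{\d}{\d t}\big|_{t=0} g_t^{-1}(x)=-v(x)$ by differentiating the identity $g_t(g_t^{-1}(x))=x$.
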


\begin{proof}
Let $g \colon(-\e,\e)\to \mathrm{Diff}(S^1)$ be a smooth curve with $g_0 = \id$ and $\left.\frac{\d}{\d t}\right|_{t=0} g_t = v \in \mathfrak g \cong \mathcal C^\infty(S^1,\R)$. Let $x \in S^1$. Deriving $x = g_t(g_t^{-1}(x))$ w.r.t.\ $t$ at $t=0$ we obtain
\[
\left.\frac{\d}{\d t}\right|_{t=0} g_t^{-1}(x) = -v(x).
\]
Let $q \in Q$. We have
\[
\hat v _q(x) 
= \left.\frac{\d}{\d t}\right|_{t=0} (g_t . q)(x)
= \left.\frac{\d}{\d t}\right|_{t=0} \left(q\circ g_t^{-1}\right)(x)
= - v(x) q'(x).
\]
\end{proof}

Let $M$ denote the cotangent bundle over $Q$, which is viewed as $T^\ast Q \cong Q \times \mathcal C^\infty(S^1,\R)$. The pairing of $(q,p) \in M$ with an element $v \in T_q Q \cong \mathcal C^\infty(S^1,\R)$ is given by
\[
\langle (q,p), v \rangle = \int_{S^1} p(x)v(x) \d x.
\]
A symplectic structure on $M$ is given by
\[
\Omega( (v^q,v^p), (w^q,w^p) ) = \int_{S^1} (w^p v^q- v^p w^q ) \d x.
\]
For $(q,p) \in M$ and $\bar H\colon M \to \R$ the maps $\frac{\delta \bar H}{\delta q}$ and $\frac{\delta \bar H}{\delta p}$ can be defined by
\begin{align*}
\D \bar H|_{(q,p)}(w^q,0)  = \int_{S^1}  \frac{\delta \bar H}{\delta q} w^q \d x,
\quad
\D \bar H|_{(q,p)}(0,w^p)  = \int_{S^1}  \frac{\delta \bar H}{\delta p} w^p \d x,
\end{align*}
where $\D$ denotes the G\^ateaux derivative.\footnote{Each maps $\frac{\delta \bar H}{\delta q}$ and $\frac{\delta \bar H}{\delta p}$ can depend on both $q$ and $p$ although this is not incorporated in the notation.} Now
\[
\D \bar H|_{(q,p)}(w^q,w^p) 
= \Omega\left( \left(\frac{\delta \bar H}{\delta p},-\frac{\delta \bar H}{\delta q}\right), (w^q,w^p) \right) 
\]
and Hamilton's equations can be written in the familiar looking form
\begin{equation}\label{eq:HamCanonicalLooking}
q_t = \;\;\,\frac{\delta \bar H}{\delta p}, \qquad p_t = -\frac{\delta \bar H}{\delta q}.
\end{equation}

We consider the cotangent lifted action of the aforementioned action of $G$ on $Q$ to obtain a Hamiltonian group action of $G$ on $M$ given by
\[g.(q,p) = (q \circ g^{-1}, p \circ g^{-1} \cdot (g^{-1})_x).\]
Alternatively, interpreting the fibre component of elements in $T^\ast Q$ as 1-forms the action is given by $g.(q,p\d x) = \left(q \circ g^{-1}, (g^{-1})^\ast (p\d x)\right)$.


\begin{prop}
The momentum map $J \colon M \to \mathfrak g^\ast$ of the cotangent lifted action of $G$ on $M$ is given as
\[
J(q,p) = -q_x\cdot p.
\]
\end{prop}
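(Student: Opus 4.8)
The plan is to compute the momentum map directly from its defining property. Recall that for a Hamiltonian action of $G$ on a symplectic manifold $(M,\Omega)$, the momentum map $J\colon M \to \mathfrak g^\ast$ is characterized by the condition that for every $v \in \mathfrak g$, the function $\langle J(\cdot), v\rangle \colon M \to \R$ is a Hamiltonian for the fundamental vector field $v_M$ generated by $v$, i.e.\ $\d \langle J, v\rangle = \iota_{v_M}\Omega$. For cotangent-lifted actions there is the standard closed formula $\langle J(q,p), v\rangle = \langle (q,p), v_Q(q)\rangle$, where $v_Q$ is the infinitesimal generator of the base action on $Q$ and the right-hand pairing is the cotangent pairing of $T^\ast_q Q$ with $T_q Q$. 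So the first step is simply to quote (or quickly verify) this formula in our concrete Fréchet setting.

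Next I would substitute the data already computed in the excerpt. The base action is $g.q = q\circ g^{-1}$, and the preceding lemma gives the infinitesimal generator $v_Q(q) = \hat v_q = -v\cdot q_x \in C^\infty(S^1,\R) \cong T_q Q$. The cotangent pairing of $(q,p)$ with a tangent vector $w^q$ is $\int_{S^1} p\, w^q\, \d x$. Therefore
\[
\langle J(q,p), v\rangle = \int_{S^1} p(x)\,\bigl(-v(x) q_x(x)\bigr)\, \d x = \int_{S^1} \bigl(-q_x(x) p(x)\bigr) v(x)\, \d x.
\]
Comparing with the dual pairing $\langle u, v\rangle = \int_{S^1} u v\, \d x$ between $\mathfrak g^\ast \cong C^\infty(S^1,\R)$ and $\mathfrak g$, this says exactly $J(q,p) = -q_x\cdot p$, which is the claim.

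To make the argument self-contained rather than merely invoking the cotangent-lift formula, I would instead verify the momentum map condition directly: fix $v \in \mathfrak g$ and let $J^v(q,p) := \int_{S^1}(-q_x p) v\, \d x$. One computes the G\^ateaux derivative $\D J^v|_{(q,p)}(w^q,w^p) = \int_{S^1}\bigl(-(w^q)_x p - q_x w^p\bigr)v\,\d x = \int_{S^1}\bigl(q_x v_x \, w^p{}^{\!*}\!\ldots\bigr)$—more carefully, integrating by parts in the first term gives $\int_{S^1}\bigl((p v)_x w^q - q_x v\, w^p\bigr)\d x$. On the other hand the fundamental vector field of $v$ on $M$ is the cotangent lift of $\hat v$, whose components are $(v_M)^q = -v q_x$ and $(v_M)^p = -(pv)_x$ (the latter being $(\hat v)^\ast$ acting on the fibre, consistent with the lifted action $g.(q,p\d x) = (q\circ g^{-1}, (g^{-1})^\ast(p\d x))$ differentiated at the identity). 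Plugging into $\Omega\bigl((v_M)^q,(v_M)^p),(w^q,w^p)\bigr) = \int_{S^1}\bigl(w^p (v_M)^q - (v_M)^p w^q\bigr)\d x$ reproduces $\D J^v|_{(q,p)}(w^q,w^p)$, so $J^v$ is indeed a Hamiltonian for $v_M$. Since this holds for all $v$, the map $J(q,p) = -q_x p$ is the momentum map.

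The only genuine subtlety—the "hard part"—is bookkeeping the signs and the infinite-dimensional pairings consistently: the Lie bracket on $\mathfrak{X}(S^1)$ here is the \emph{negative} of the usual one, the base action uses $g^{-1}$, and the identification of $\mathfrak g^\ast$ with $C^\infty(S^1,\R)$ via $\int uv\,\d x$ all interact. I would therefore be explicit about which convention makes $J$ a Poisson morphism (equivariant momentum maps for cotangent lifts are automatically equivariant, hence Poisson), and double-check against Lemma~\ref{lemma:HamEQ}: the collective Hamiltonian $\bar H = H\circ J$ should, via $q_t = \delta\bar H/\delta p$, $p_t = -\delta\bar H/\delta q$, push forward under $u = -q_x p$ to $u_t = (\p_x u + u\p_x)\delta H/\delta u$. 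That consistency check (which specializes to $u=q_xp$ up to the sign convention chosen in the Motivation section) is the natural sanity test that the sign in $J(q,p) = -q_x p$ is the correct one.
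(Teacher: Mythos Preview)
Your primary approach---quoting the cotangent-lift momentum map formula $\langle J(q,p),v\rangle = \langle (q,p),\hat v_q\rangle$, substituting $\hat v_q = -v q_x$ from the preceding lemma, and reading off $J(q,p)=-q_xp$---is exactly the paper's proof. The paper stops there; your additional direct verification of the defining condition $\D J^v = \iota_{v_M}\Omega$ (with the correctly computed fibre component $(v_M)^p = -(pv)_x$) and the sign sanity check are sound but go beyond what the paper does.
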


\begin{proof}
Using the formula for the momentum map of cotangent lifted action (see \cite[p.283]{Marsden78}) we obtain
\[
\langle v,J(q,p) \rangle
= \langle (q,p), \hat v_q \rangle
= \langle (q,p), -v\cdot q_x \rangle
= -\int_{S^1} v(x) p(x) q_x(x) \d x
= \langle v, -q_x \cdot p \rangle 
\]
as claimed.
\end{proof}

The manifold $M$ is equipped with a Poisson structure defined by the symplectic structure $\Omega$. By construction, the momentum map $J \colon M \to \mathfrak g^\ast$ is a Poisson map. It is surjective (take $q = \id$) and therefore called a {\em full realisation of $\mathfrak g^\ast$}. If $H$ is a Hamiltonian on $\mathfrak g^\ast$ then the Hamiltonian flow of the {\em collective} system $(M,\Omega,H\circ J)$ maps fibres of $J$ to fibres and descends to the Hamiltonian flow of the system $(\mathfrak g^\ast,\{\cdot,\cdot\},H)$ because the Hamiltonian vector fields are $J$-related and $J$ is a Poisson map. More generally, a symplectic map on $M$ that maps fibres to fibres descends to a Poisson map on $g^\ast$.

%

\begin{example}
As in example \ref{ex:Hong} we consider the Hamiltonian
\[H(u) = \int_{S^1} \mathcal H(u^{\mathrm{jet}}(x)) \d x\]
on $\mathfrak g^\ast$. Hamilton's equations of the collective system $(M,\Omega,H\circ J)$ are given as the following system of PDEs
\begin{align*}
q_t &= q_x \sum_{j=0}^K (-1)^j  \frac{\p^j }{\p x^j}\left( \frac{\p \mathcal H}{\p u_{x^j}}(u^{\mathrm{jet}}) \right), \\
p_t &= -\frac{\p}{\p x}\left(
p\sum_{j=0}^K (-1)^j  \frac{\p^j }{\p x^j}\left(\frac{\p \mathcal H}{\p u_{x^j}}(u^{\mathrm{jet}})\right)
\right).
\end{align*}

Choosing $\mathcal H(u) = - \frac 16 u^2$ (Burgers' equation) yields
\[
q_t = -\frac 13 q^2_xp,
\qquad
p_t = -\frac 13 (q_xp^2)_x.
\]
\end{example}

\section{Integrator of the collective system}\label{sec:collIntegrator}
\subsection{Spatial discretisation}
We use a second-order finite-difference method in space to discretise the realisation $J$ and the Hamiltonian $H$ to obtain a system of Hamiltonian ODEs in canonical form: as before, we consider $S^1$ as the quotient $\R / L\Z$. We introduce a uniform grid
$(x_1,\ldots,x_N)$, $x_j = j\cdot \Delta x$, $\Delta x = 1/N$ with $N$ points and periodic boundary conditions. Moreover, we consider the corresponding half-grid $(x_{1/2},\ldots,x_{N-1/2})$. Both grids are illustrated in figure \ref{fig:grid}.
\begin{figure}
\begin{center}
\begin{tikzpicture}[ scale=1.2]

\draw[thick, -] (-0.5,0) -- (9.5,0);

\draw (0 cm, 1pt) -- (0 cm, -1pt) node[anchor = north] {$x_{N-1}$};
\draw (1 cm, 8pt) -- (1 cm, -8pt) node[anchor = north] {$x_N=0$};

\draw (2 cm, 1pt) -- (2 cm, -1pt) node[anchor = north] {$x_{1 }$};
\draw (3 cm, 1pt) -- (3 cm, -1pt) node[anchor = north] {$x_{2}$};
\draw (4 cm, 1pt) -- (4 cm, -1pt) node[anchor = north] {$x_{3}$};

\draw (5.5 cm, 0pt) -- (5.5 cm, 0pt) node[anchor = north] {$\ldots$};
\draw (7 cm, 1pt) -- (7 cm, -1pt) node[anchor = north] {$x_{N-1}$};

\draw (8 cm, 8pt) -- (8 cm, -8pt) node[anchor = north] {$x_N=0$};
\draw (9 cm, 1pt) -- (9 cm, -1pt) node[anchor = north] {$x_{1}$};

\draw (2 cm, 8pt) -- (2 cm, 4pt) ;
\draw (3 cm, 8pt) -- (3 cm, 4pt) ;
\draw (2 cm, 6pt) -- (3 cm, 6pt) ;
\draw (2.5 cm,6pt) node[anchor = south] {$\Delta x$};

\end{tikzpicture}\\
\vspace{1cm}
\begin{tikzpicture}[ scale=1.2]

\draw[thick, -] (-0.5,0) -- (9.5,0);

\draw (0.5 cm, 1pt) -- (0.5 cm, -1pt) node[anchor = north] {$x_{N-\frac 12}$};
\draw (1 cm, 8pt) -- (1 cm, -8pt) node[anchor = north] {};

\draw (1.5 cm, 1pt) -- (1.5 cm, -1pt) node[anchor = north] {$x_{\frac 12 }$};
\draw (2.5 cm, 1pt) -- (2.5 cm, -1pt) node[anchor = north] {$x_{\frac 32}$};
\draw (3.5 cm, 1pt) -- (3.5 cm, -1pt) node[anchor = north] {$x_{\frac 52}$};

\draw (5.5 cm, 0pt) -- (5.5 cm, 0pt) node[anchor = north] {$\ldots$};
\draw (6.5 cm, 1pt) -- (6.5 cm, -1pt) node[anchor = north] {$x_{N-\frac 32}$};
\draw (7.5 cm, 1pt) -- (7.5 cm, -1pt) node[anchor = north] {$x_{N-\frac 12}$};

\draw (8 cm, 8pt) -- (8 cm, -8pt) node[anchor = north] {};
\draw (8.5 cm, 1pt) -- (8.5 cm, -1pt) node[anchor = north] {$x_{\frac 12}$};

\end{tikzpicture}

\end{center}
\caption{Uniform periodic grids on $S^1 \cong \R/L\Z$, $L>0$.}\label{fig:grid}
\end{figure}
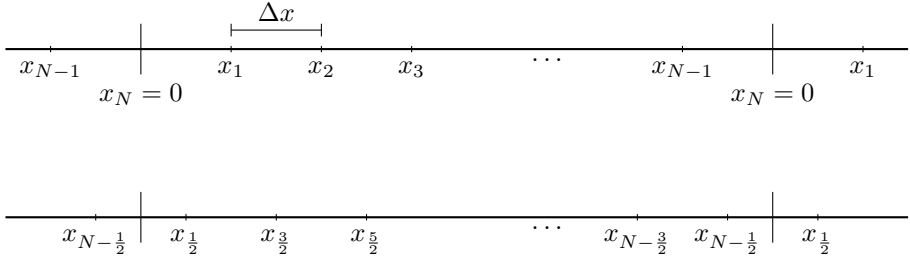
In the discretised setting, elements in $Q=\mathcal C^\infty(S^1,S^1)$ and $\mathcal C^\infty(S^1,\R)$ are approximated by their values on the considered grid. This leads to an approximation of $\mathfrak g^\ast$ and $Q$ by the vector space $\R^N$ and an approximation of $M$ by $T^\ast \R^N \cong \R^{2N}$, which we equip with coordinates $(\hat q,\hat p) = q^1,\ldots,q^N,p_1,\ldots,p_n$ in the usual way.
Discretising the symplectic structure $\Omega$ we obtain
\[
\omega = \Delta x \sum_{j=1}^N \d q^j \wedge \d p_j,
\]
which is the standard symplectic structure up to the factor $\Delta x$.
For $q \in Q$ we obtain a second-order accurate approximation $D_{\Delta x}(\hat q)$ of the spatial derivative $q_x$ on the half-grid $( 1/2 \Delta x, 3/2 \Delta x,\ldots, (N-1/2)\Delta x)$ using compact central differences as follows:
\begin{align*}
\begin{pmatrix}
q_x(\frac 12 \Delta x), q_x(\frac 32 \Delta x), \ldots,  q_x((N-\frac 32) \Delta x), q_x((N-\frac12) \Delta x)
\end{pmatrix}^T\\
\approx
\frac 1 {\Delta x} \left[
\underbrace{\begin{pmatrix}
1&0&\ldots & 0 &-1\\
-1&1&\ldots & 0 &0\\
&\ddots & \ddots \\
&&\ddots & \ddots \\
&&&-1&1
\end{pmatrix}}_{=: T}
\begin{pmatrix}
q(\Delta x)\\q(2 \Delta x)\\ \vdots \\ q((N-1) \Delta x) \\ q(N\Delta x)\end{pmatrix}
+ \begin{pmatrix}
C(q)\\0\\ \vdots \\ 0\\0
\end{pmatrix}
\right].
\end{align*}
The quantity $C(q)/L$ is the winding number (degree) of the map $q \colon S^1 \to S^1$\footnote{Let $\pi \colon \R \to S^1$ denote the universal covering of $S^1 \cong \R / L\Z$ and let $\tilde q \colon \R \to \R$ be any lift of the map $\pi \circ q \colon \R \to S^1$ to the covering space. Now $C(q) = \tilde q(L) - \tilde q(0)$. If, for instance, $q$ is the identity map on $S^1$ then $C(q)=L$.}. The values for $q_x$ are now available on the half-grid. Notice that the quantity $C(q)$ is constant if $q$ evolves smoothly subject to the PDE \eqref{eq:HamCanonicalLooking} because $C(q)$ can only take values in $L\Z$. 

A discrete version of the map $J \colon M \to g^\ast$ is given by $\hat J \colon \R^{2N}\to \R^N$ with $\hat J(\hat q, \hat p) = D_{\Delta x}\hat q .S \hat p$. Its values correspond to the half-grid.
The matrix $S$ is given as
\[
S = \frac 12 \begin{pmatrix}1&&&1\\ 1&1\\ &\ddots&\ddots\\ &&1&1 \end{pmatrix}.
\]
It averages the values of $\hat p$ to obtain second order accurate approximations of $p$ on the half-grid. In this way, we obtain approximations to $u = q_xp$ on the half grid. Approximations for $u_x$ and higher derivatives are obtained by successively applying $T_{\Delta x}$ and $T_{\Delta x}^{T}$, i.e.\
\begin{equation}\label{eq:JetU}
\frac{\p^k u}{\p x^k}
\approx \frac{\p^k_{\Delta x} u}{\p_{\Delta x} x^k}
:=
\begin{cases}
\qquad D_{\Delta x}\hat q .S \hat p \quad &\text{ if } k=0\\[0.8 em]
-T^T \frac{\p_{\Delta x}^{k-1} u}{\p_{\Delta x} x^{k-1}}/\Delta x \quad &\text{ if $k$ is odd }\\[0.8 em]
\;\;\,T\;\, \frac{\p_{\Delta x}^{k-1} u}{\p_{\Delta x} x^{k-1}}/\Delta x \quad &\text{ if $k$ is even.}
\end{cases}
\end{equation}
Here $T^T$ denotes the transpose of the matrix $T$ and $.$ denotes component-wise multiplication. Now all approximations for even derivatives are available on the half-grid and all odd derivatives on the full-grid.
A Hamiltonian of the form $\int_{S^1} \mathcal H(u,u_x,u_{xx},\ldots) \d x$ is approximated by the sum
\begin{equation}\label{eq:ApproxInH}
\int_{S^1} \mathcal H(u,u_x,u_{xx},\ldots) \d x \approx \Delta x \sum_{j=1}^{N} \mathcal H(u(x_{j-1/2}),u_x(x_{j-1/2}),u_{xx}(x_{j-1/2}),\ldots).
\end{equation}
To evaluate \eqref{eq:ApproxInH}, all approximations of $\frac{\p^k u}{\p x^k}$ where $k$ is odd are multiplied by $S$ such that the approximation of the jet of $u$ is available on the half-grid. The second-order averaging with $S$ can be avoided if $\mathcal H$ is of the form
\begin{align*}
\mathcal H (u^{\mathrm{jet}}(x))
&= \mathcal H^{\mathrm{even}} (u(x),u_{xx}(x),u_{xxxx}(x),\ldots)\\
&+\mathcal H^{\mathrm{odd}} (u_x(x),u_{xxx}(x),u_{xxxxx}(x),\ldots).
\end{align*}
We can then approximate the Hamiltonian by
\begin{align}\nonumber
\int_{S^1} \mathcal H(u,u_x,u_{xx},\ldots) \d x 
&\approx
  \Delta x \sum_{j=1}^{N} \mathcal H^{\mathrm{even}}(u(x_{j-1/2}),u_{xx}(x_{j-1/2}),u_{xxxx}(x_{j-1/2})\ldots)\\ \label{eq:HApproxSplit}
&+\Delta x \sum_{j=1}^{N} \mathcal H^{\mathrm{odd}}(u_x(x_{j}),u_{xxx}(x_{j}),u_{xxxxx}(x_{j}),\ldots).
\end{align}

Taking into account that the symplectic form $\omega$ is the canonical symplectic structure scaled by $\Delta x$, defining $\hat H$ as
\begin{equation}\label{eq:hatH}
\hat H(u)= \sum_{j=1}^{N} \mathcal H(u(x_{j-1/2}),u_x(x_{j-1/2}),u_{xx}(x_{j-1/2}),\ldots) 
\end{equation}
or as the corresponding term from \eqref{eq:HApproxSplit} puts Hamilton's equations into the canonical form
\begin{equation}\label{eq:HamCanonical}
\dot {\hat q} = \;\;\, \nabla_{\hat p} \bar {\hat H}(\hat q, \hat p),
\qquad \dot {\hat p} = -\nabla_{\hat q} \bar {\hat H}(\hat q, \hat p)
\end{equation}
with collective Hamiltonian $\bar {\hat H} = \hat H \circ \hat J \colon \R^{2N}\to \R$.
Here the dot denotes the time-derivative.
Finally, \eqref{eq:HamCanonical} is a 2nd order accurate, spatial discretisation of \eqref{eq:HamCanonicalLooking}.

\begin{remark}
An alternative to the described finite-difference discretisation are spectral methods. Notice that $q \in \mathcal C^\infty(S^1,S^1)$ can be split into the winding term $C(q)\id$ and the term $q-C(q)\id$ which has winding number zero. In a pseudo-spectral discretisation, the derivative of $q-C(q)\id$ is calculated in a Fourier basis and the winding term $C(q)\id$ is accounted for in the derivative $q_x$ by adding the constant $C(q)/L$ component-wise. The derivatives of $u=q_xp$ can be calculated without complications.

A full spectral discretisation is also possible because embedding $\mathcal C^\infty(S^1,S^1)$ and $\mathcal C^\infty(S^1,\R)$ into the Hilbert space $L_2$ and choosing any orthonormal basis will lead to a symplectic form $\omega$ which is in the standard form (splitting $q$ as above to allow for a Fourier basis). Therefore, Hamilton's equations for the basis coefficients appear in canonical form.
\end{remark}

\subsection{The integration scheme}

A numerical solution to the original equation \eqref{eq:Hamongstar} can now be obtained as follows.
\begin{enumerate}
\item
Lift an initial condition \[\hat u ^{(0)}=(u^{(0)}(x_1),\ldots,(u^{(0)}(x_N))\] to $(\hat q^{(0)},\hat p^{(0)})  \in \hat J^{-1}(\hat u ^{(0)})$, for example by setting 
\begin{align*}
\hat q^{(0)} &= (\Delta x, 2\Delta x,\ldots, N\Delta x),\\
\hat p^{(0)} &=\hat u ^{(0)},
\end{align*}
as we will do in our numerical experiments. Notice that $\hat q^{(0)}$ is a discretisation of the identity map on $S^1$. The exact and discrete derivative is the constant 1 function or vector.
\item
The system of Hamiltonian ODEs \eqref{eq:HamCanonical} can be integrated subject to the initial conditions $(\hat q^{(0)},\hat p^{(0)})$ using a symplectic numerical integrator. 
\item
Approximations to $u$ can be calculated from $(\hat q,\hat p)$ on the half-grid as $D_{\Delta x} \hat q . S \hat p$. 
\end{enumerate}

\begin{remark}\label{rem:energyDiscreteCollective}
Conservation of $\bar{\hat H}$ in \eqref{eq:HamCanonical} exactly corresponds to conservation of the discretised Hamiltonian $\hat H$ \eqref{eq:ApproxInH} or \eqref{eq:HApproxSplit} because we consistently relate $u$ and $(q,p)$ by \eqref{eq:JetU}. Therefore, using a symplectic integrator to solve the system \eqref{eq:HamCanonical} of Hamiltonian ODEs we expect excellent energy behaviour of the numerical solution.
In the following numerical experiments we will use the symplectic implicit midpoint rule. The arising implicit equations will be solved using Newton iterations.

\end{remark}

\begin{remark}
In contrast to the case of Hamiltonian-ODEs on Poisson manifolds, it is hard for a symplectic integrator to maintain the structure fibration on the symplectic manifolds induced by the discretisation $\hat J$ of the realisation $J$. Indeed, the implicit midpoint rule used in our numerical examples fails to do so. This is why we do {\em not} obtain a (discretisation of a) Poisson integrator in this way. However, the described energy conservation properties of remark \ref{rem:energyDiscreteCollective} are independent of this drawback. Moreover, our numerical examples will show that we obtain excellent Casimir behaviour although this has not been forced by this construction. 
\end{remark}

\section{Numerical experiments}
\graphicspath{{../figs/}}

For the following numerical experiments, we consider Hamiltonian systems $(\mathrm{diff}^*(S^1),\{\cdot,\cdot\},H)$ with
\begin{equation}\label{Ham}
H = \int_{\mathcal{S}^1}\left(C_1 u^2 + C_2 u_x^2 + C_3 u^3 + C_4 u_x^3\right) \mathrm{d}x.
\end{equation}

To gain a sense of the relative performance of the collective integration method from section \ref{sec:collIntegrator} we will now develop a conventional finite-difference approach for comparison that is based on \cite{McL03}. \\

First, a finite-dimensional discrete Hamiltonian approximation is obtained by 
\begin{equation}
\hat{H} = \Delta x\sum_{j=1}^{N}(C_1 \hat{u}_j^2 + C_2 (\hat{u}_x)_j^2 + C_3 \hat{u}_j^3 + C_4 \left(\hat{u}_x\right)_j^3),
\end{equation}
where $\hat{u}_x = T\hat{u}/\Delta x$ is a compact finite-difference approximation. The PDE is then written as a set of the Hamiltonian ODEs in skew-gradient form 
\begin{equation}\label{uhatdot}
	\dot{\hat{u}} = K(\hat{u})\nabla_{\hat{u}}\hat{H}_{\Delta x}.
\end{equation}
Here, $K(\hat{u}) = (UD^{(1)}+D^{(1)}U)$ represents the discrete version of the coadjoint operator in equation \eqref{eq:Hamongstar}, where $U=\mathrm{diag}(\hat{u})$ is a diagonal matrix with $\hat{u}_j$ on the $j$th diagonal and the matrix $D^{(1)}$ is a centered finite-difference matrix with the stencil $[-\frac{1}{2\Delta x},0,\frac{1}{2\Delta x}]$ on the main three diagonals and $-\frac{1}{2\Delta x}$ and $\frac{1}{2\Delta x}$ on the top right and bottom left corners, respectively. This yields a skew-symmetric tri-diagonal matrix $K(\hat{u})$ given as
\begin{equation*}
	\frac{1}{2 \Delta x}\left(\begin{array}{ccccc}
	0&u_1+u_2&&&-u_{n}-u_1\\
	-u_1-u_2&0&u_2+u_3&  &\\
	&\ddots & \ddots & \ddots&\\
	&&-u_{n-2}-u_{n-1}&0&u_{n-1}+u_n\\
	u_1+u_n&&&-u_{n-1}-u_n&0
	\end{array}\right),
\end{equation*}
where the diagonal dots denote the continuation of the stencil $[-u_{i-1}-u_{i},0,u_{i}+u_{i+1}]$ on the $i$th row. Note that
\begin{equation}
	\frac{\mathrm{d}}{\mathrm{d}t}\hat{H} = (\nabla_{\hat{u}}\hat{H})^{\mathrm{T}}\dot{\hat{u}} = (\nabla_{\hat{u}}\hat{H})^{\mathrm{T}} K(\hat{u})\nabla_{\hat{u}}\hat{H} = 0,
\end{equation}
hence, $\hat{H}$ is a first integral of this ODE. Finally, equation \eqref{uhatdot} is integrated using the implicit midpoint rule, which is solved using Newton iterations. This method will henceforth be referred to as the \textit{conventional method}. \\

The conventional and collective methods are both order-two in space as shown by figure \ref{fig:Convergence}, which show errors for travelling wave solutions of the cubic Hamiltonian system outlined in section \ref{sec:P}. The Hamiltonian error at time $t=t_n$ is calculated by $(\hat{H}(0)-\hat{H}(t_n))/\hat{H}(0)$ and similarly for the Casimir error. The solution error is 
\begin{equation*}
\frac{||\hat{u}_n-\hat{u}_e||_2}{||\hat{u}_e||_2},
\end{equation*}
where $\hat{u}_n$ is the numerical solution, $\hat{u}_e$ is the exact solution evaluated on the grid and $||\cdot||_2$ is the discrete $L_2$-norm. We see from figure \ref{fig:Convergence_H} that the collective method preserves the energy up to machine precision for this experiment. We remark that the solution error observed in figure \ref{fig:Convergence_E} is largely attributed to phase error and does not reflect the ability of the method to preserve the shape of the travelling wave.

\begin{figure}
	\centering
	\begin{subfigure}{0.32\textwidth}
		\centering
		\includegraphics[width=\textwidth]{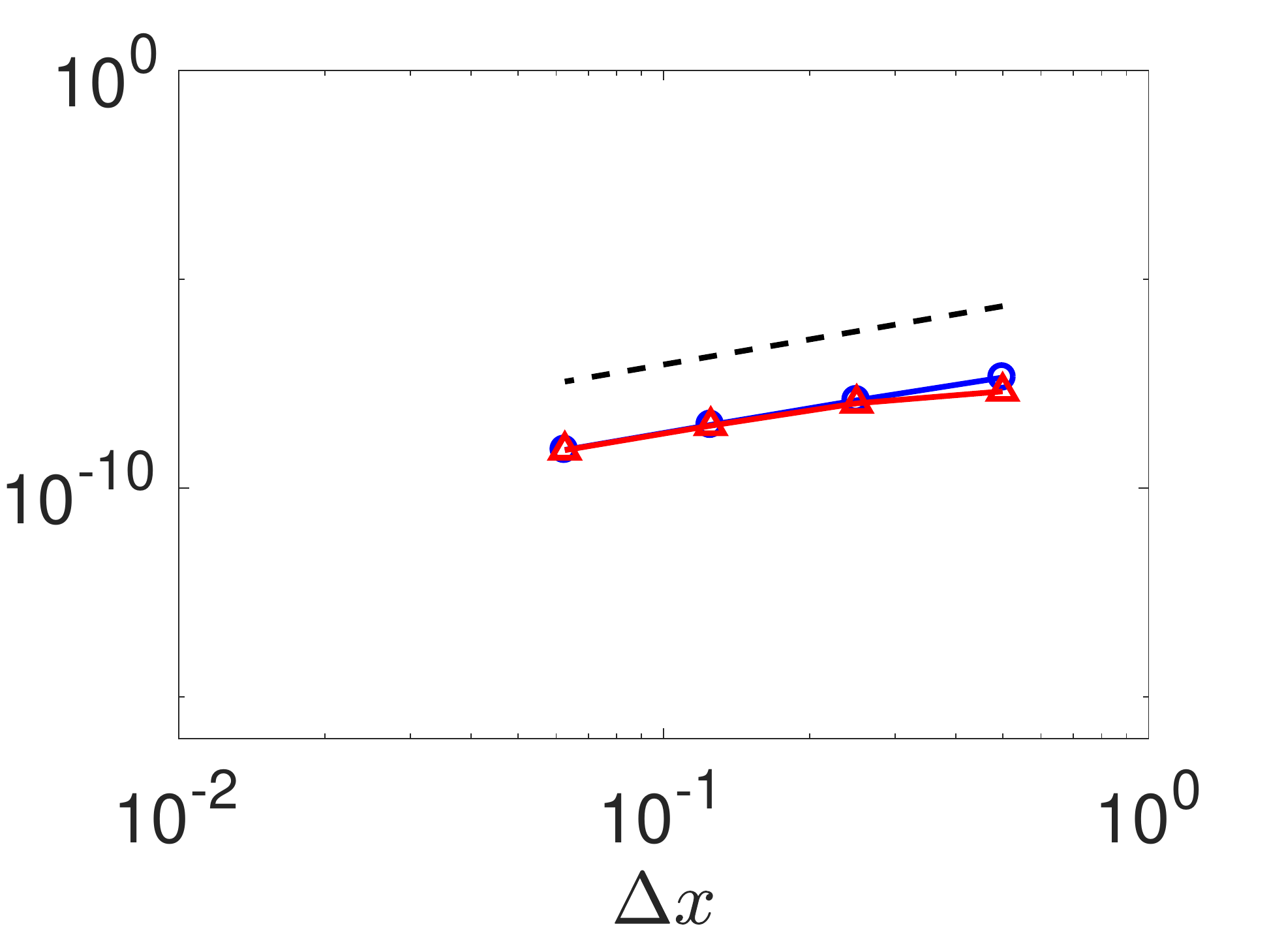}
		\subcaption{Casimir error}
		\label{fig:Convergence_C}
	\end{subfigure}
	\begin{subfigure}{0.32\textwidth}
		\centering
		\includegraphics[width=\textwidth]{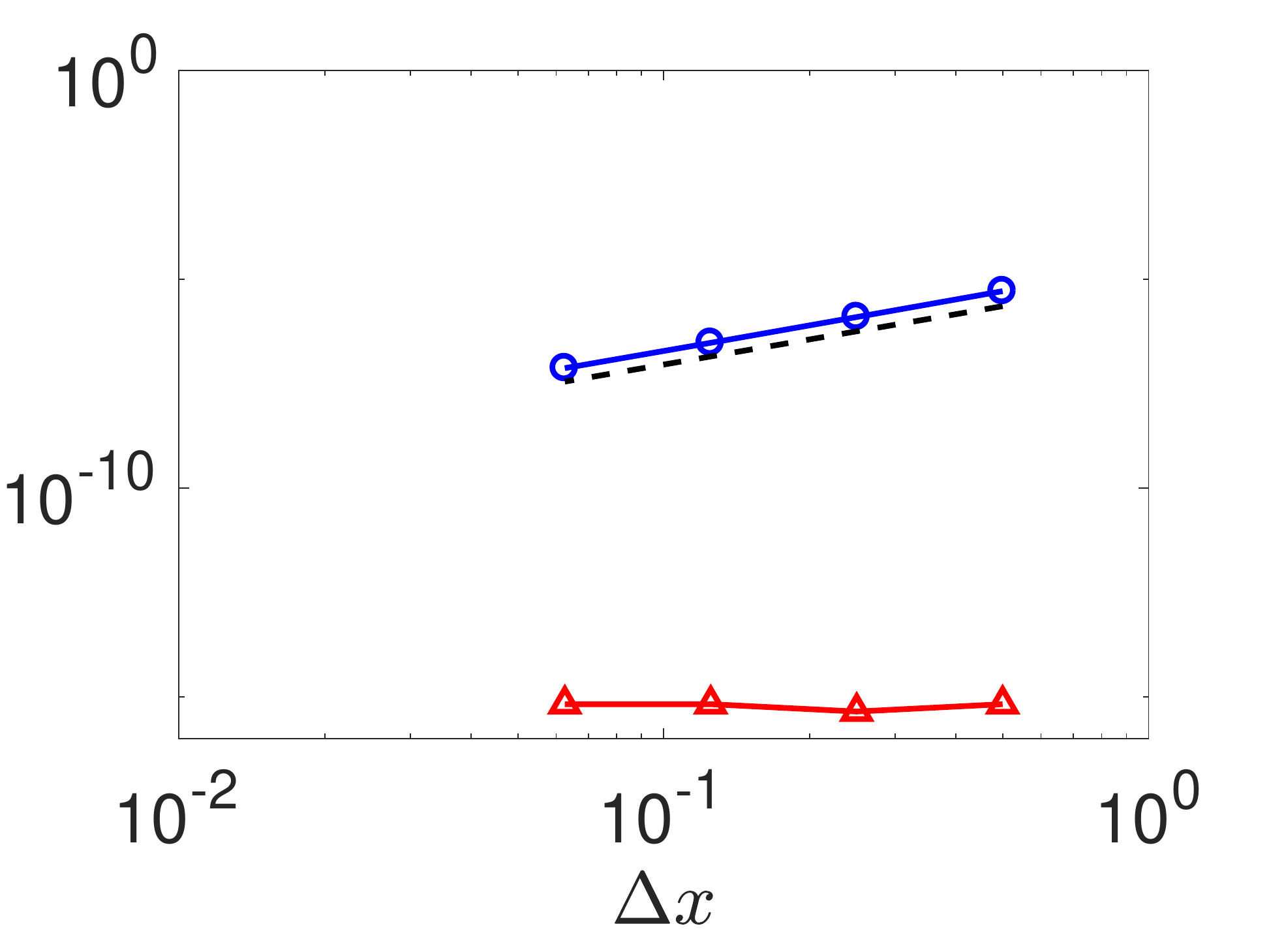}
		\subcaption{Hamiltonian error}
		\label{fig:Convergence_H}
	\end{subfigure}
	\begin{subfigure}{0.32\textwidth}
		\centering
		\includegraphics[width=\textwidth]{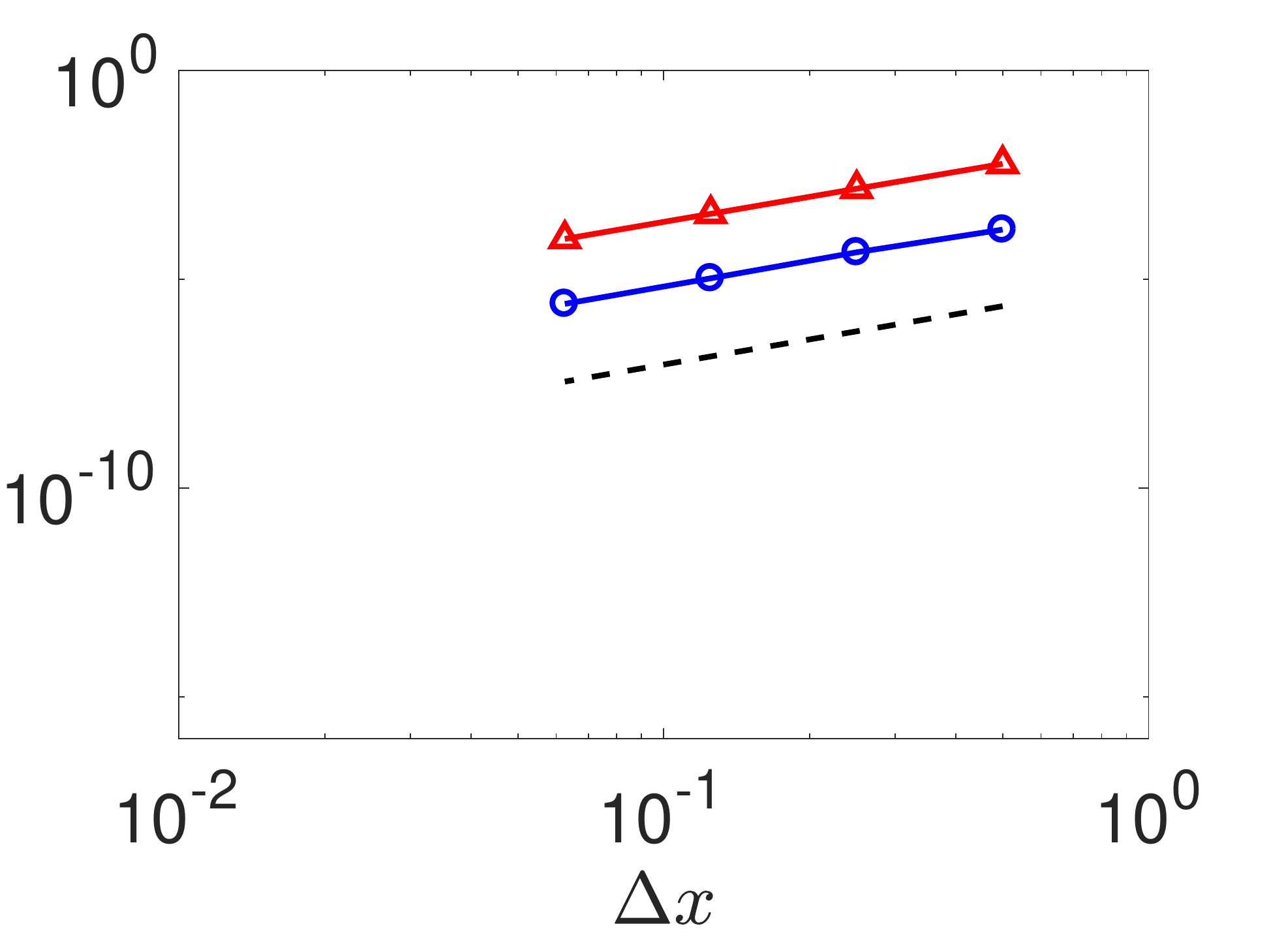}
		\subcaption{Solution error}
		\label{fig:Convergence_E}
	\end{subfigure}
	\caption{Order-two convergence for the travelling wave solution of the extended Burgers' equation outlined in section \ref{sec:P}. The plots correspond to the conventional solution (\marksymbol{o}{blue}) and the collective solution (\marksymbol{triangle}{red}) and an order-two reference line (\RefLine). The error is calculated after 512 timesteps, with $L = 8$, $\Delta t = 2^{-14}$ and $\Delta x = L/2^{k}$ for $k=1,2,3$ and $4$. }\label{fig:Convergence}
\end{figure}

\subsection{Inviscid Burgers' equation}
Setting $C_1 = 1$, $C_2 = 0$, $C_3 = 0$ and $C_4 = 0$ in equation \eqref{Ham} yields the well-known inviscid Burgers' equation
\begin{equation*}
u_t = 6 u u_x.
\end{equation*}
In the following example, the equation is modelled with the initial conditions $u(0,x) = 1 + \frac{1}{2} \cos(2 \pi x /L)$, which develops a shock wave at about $t=0.4$. Figure \ref{fig:Burgers} shows three snapshots of the conventional and the collective solutions before and after the shock and figure \ref{fig:Burgers_errors} shows the Casimir and Hamiltonian errors over time. Over the short simulation time, both methods yield qualitatively similar solutions and it is difficult to tell them apart. Due to the presence of shock waves in the inviscid Burgers' equation, it is difficult to gain a sense of the long term behaviour of the methods as no solution exists after a finite time. From figure \ref{fig:buregerHerror} we see that the conventional method has exceptional Hamiltonian preservation properties and maintains the error at machine precision throughout the simulation. This can be explained by the fact that the implicit midpoint rule preserves quadratic invariants, that is, $\hat{H}$ is preserved exactly by the conventional method. Otherwise, the errors grow quadratically until the shock develops, after which, they appear bounded. The Hamiltonian error of the collective solution can also be reduced to machine precision by reducing the time step $\Delta t$.
\begin{figure}
	\centering
	\begin{subfigure}{0.3\textwidth}
		\centering
		\includegraphics[width=\textwidth]{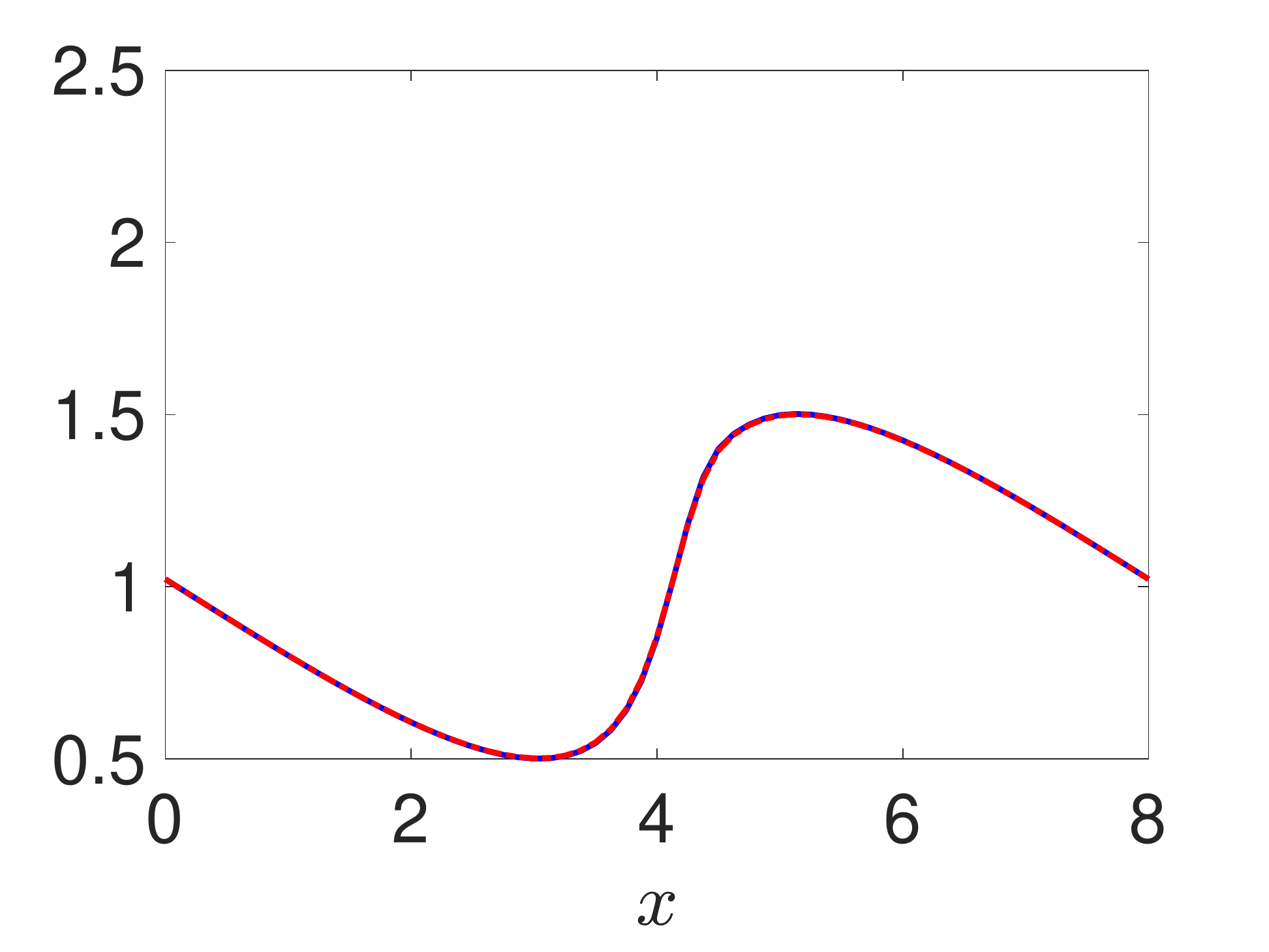}
		\subcaption{$t=0.317$}
		\label{}
	\end{subfigure}
	\begin{subfigure}{0.3\textwidth}
		\centering
		\includegraphics[width=\textwidth]{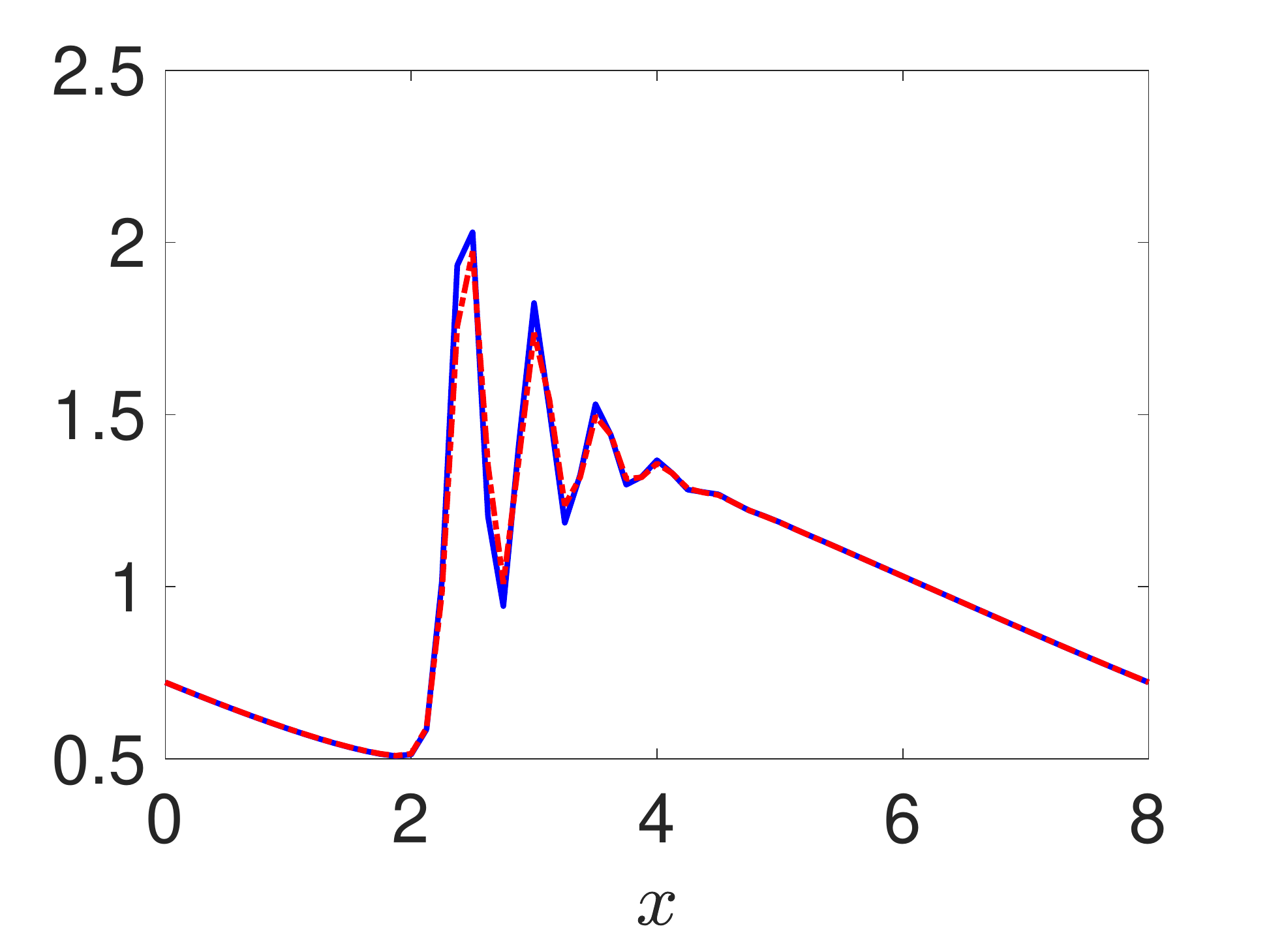}
		\subcaption{ $t=0.635$}
		\label{}
	\end{subfigure}
	\begin{subfigure}{0.3\textwidth}
		\centering
		\includegraphics[width=\textwidth]{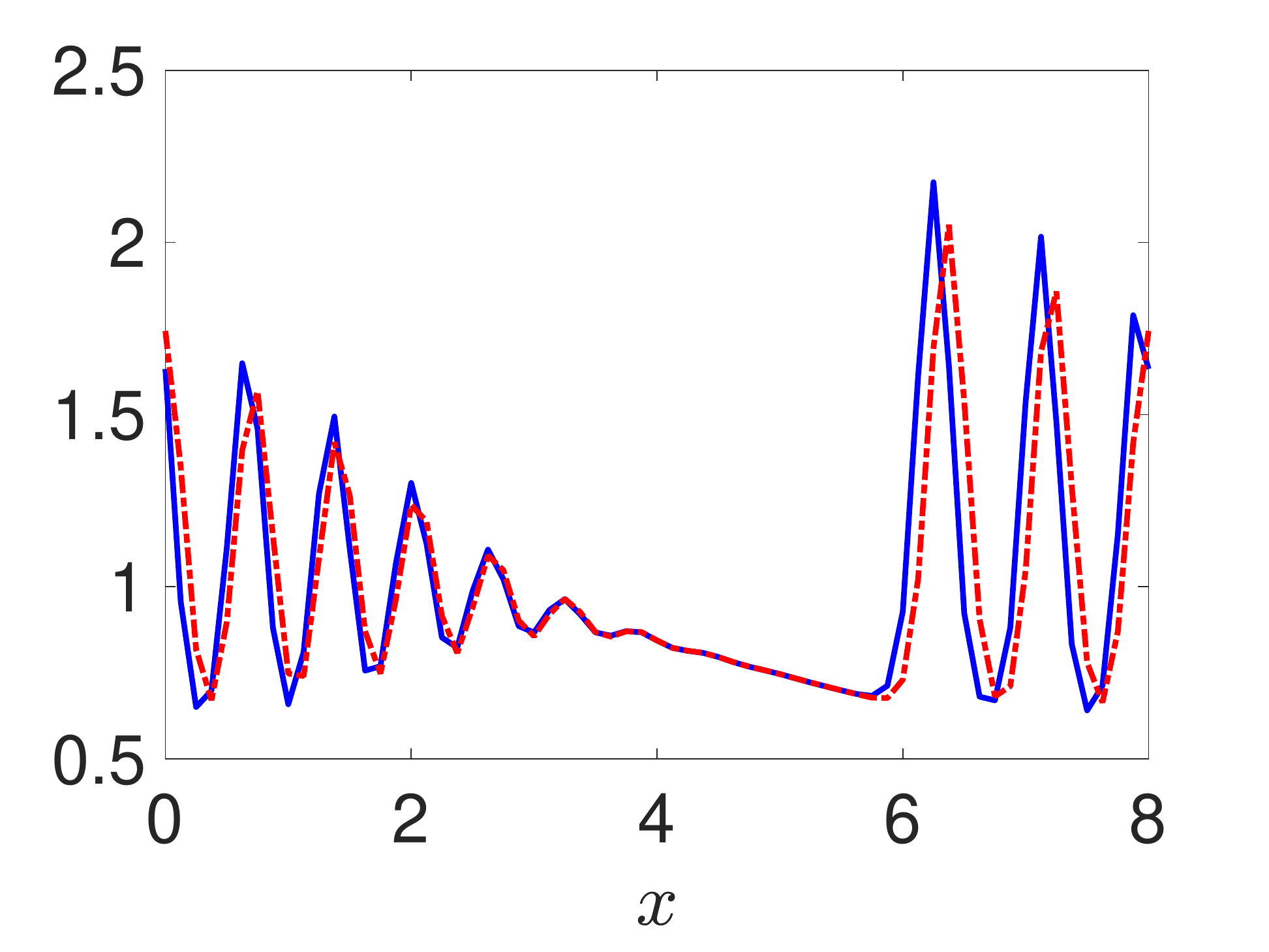}
		\subcaption{$t=1.37$}
		\label{}
	\end{subfigure}
	\caption{ Inviscid Burgers' equation solutions of the conventional method (\ConvLine) and collective method (\CollLine). The grid parameters are $n_x = 64$, $\Delta x = 0.125$, $L = 8$ and $\Delta t = 2^{-12}$. A shock forms at about $t=0.4$.}
	\label{fig:Burgers}
\end{figure}
\begin{figure}
	\centering
	\begin{subfigure}{0.32\textwidth}
		\centering
		\includegraphics[width=\textwidth]{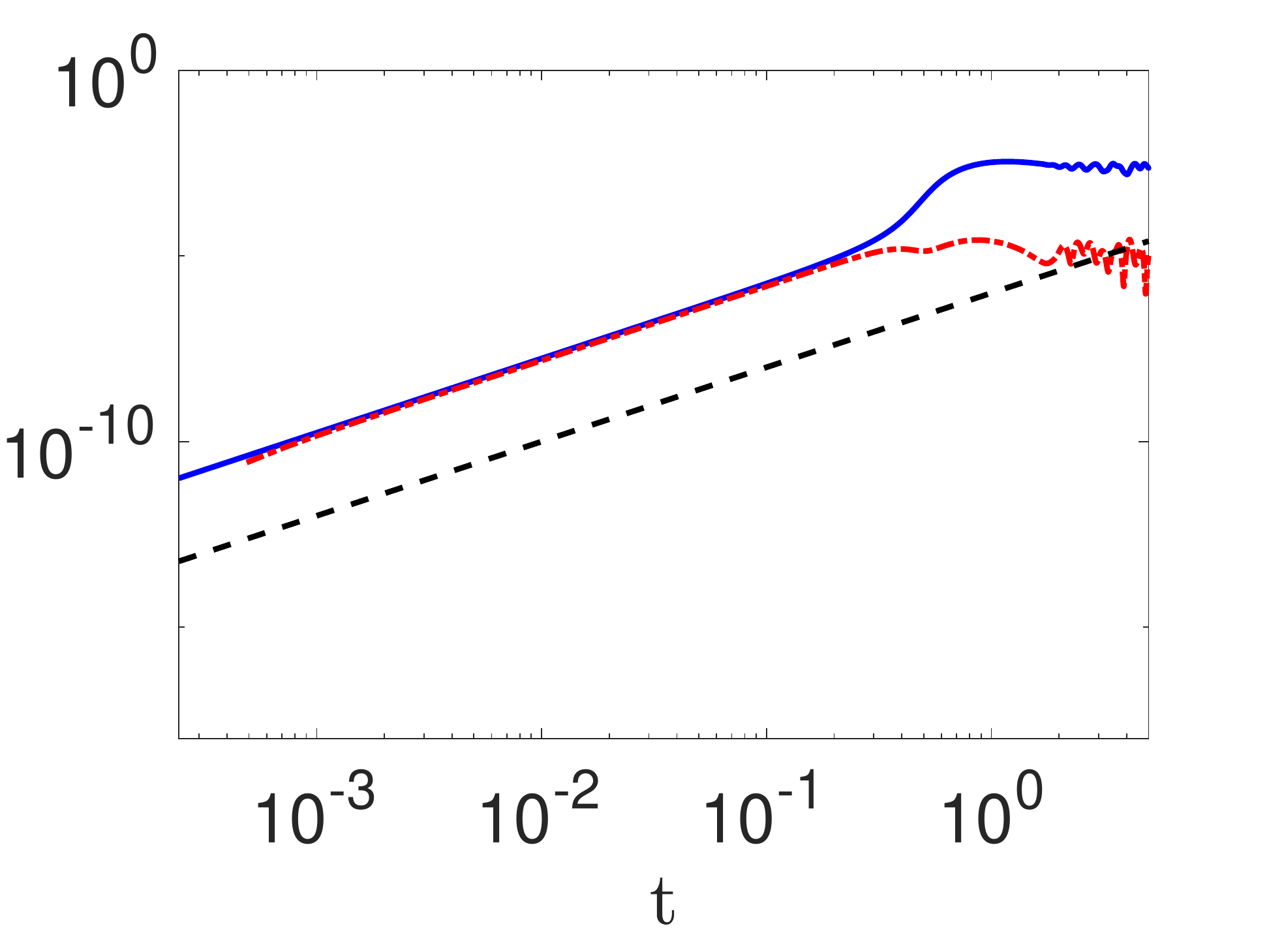}
		\subcaption{Casimir error}
		\label{}
	\end{subfigure}
	\begin{subfigure}{0.32\textwidth}
		\centering
		\includegraphics[width=\textwidth]{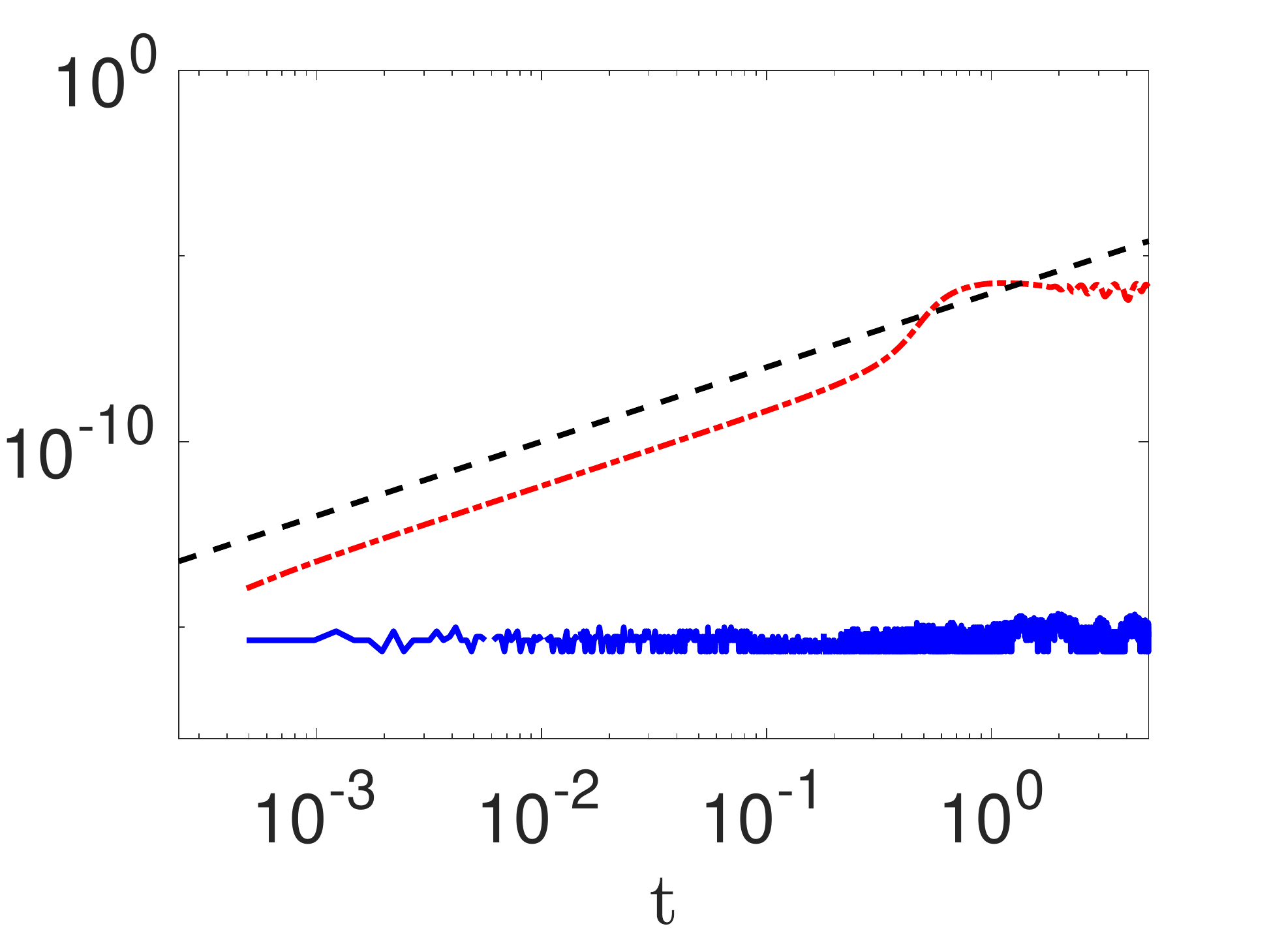}
		\subcaption{Hamiltonian error}
		\label{fig:buregerHerror}
	\end{subfigure}
	%
	\caption{The errors corresponding to the conventional (\ConvLine) and collective (\CollLine) methods for the inviscid Burgers' equation and $\mathcal{O}(t^2)$ reference lines (\RefLine). }\label{fig:Burgers_errors}
\end{figure}
\subsection{Extended Burgers' equation}\label{sec:P}
We now focus our attention to a cubic Hamiltonian problem that we have designed to admit non-symmetric travelling wave solutions. The PDE being modelled arises from setting $C_1 = 1/2$, $C_2 = 1/2$, $C_3 = -1/4$ and $C_4 = 1/2$ in equation \eqref{Ham}, which yields
\begin{equation*}
		u_t = 3uu_x - \frac{9}{4}u^2u_x - u_xu_{xx} - 3u_x^2u_{xx} - 2uu_{xxx} - 2uu_xu_{xxx} - 6uu_{xx}^2
\end{equation*}
and is henceforth referred to as the \textit{extended Burgers' equation}.
\subsubsection{Travelling wave solutions}
In this example, we look for solutions of the form $u(x,t)=f(s)$, where $s=x-ct$ for wave velocity $c$. This yields an ODE in $s$, which is solved to a high degree of accuracy on the grid using MATLAB's $\texttt{ode45}$. Figure \ref{fig:TW_P} shows snapshots of travelling wave solutions to the extended inviscid Burgers' equation and their Fourier transforms and figure \ref{fig:errors_TW} shows the corresponding errors. The main observations concerning these figures is that the errors of the collective solution are bounded whereas the conventional solution errors grow with time. In particular, the high frequency Fourier modes of the conventional solution erroneously drift away from that of the exact solution while the collective solution does a reasonably good job at keeping these modes bounded. These erroneously large high frequency modes can be seen with the naked eye in figure \ref{tw3}. This is again highlighted by figure \ref{fig:FM_P}, which shows that the highest frequency mode (i.e., the mode whose wavelength is equal to the grid spacing $\Delta x$) grows exponentially in time. Figures \ref{fig:errors_TW_C} and \ref{fig:errors_TW_H} show the behaviour of the Casimir and Hamiltonian errors. This highlights the ability of the collective method to keep the errors bounded, while the errors of the conventional solution grow linearly with time. Towards the end of the simulation, the errors of the conventional solution become so large that the implicit equations arising from the midpoint rule become too difficult to solve numerically and the Newton iterations fail to converge. The simulation ends with the conventional method errors diverging to infinity. 
\begin{figure}
	\centering
	\begin{subfigure}{0.32\textwidth}
		\centering
		\includegraphics[width=\textwidth]{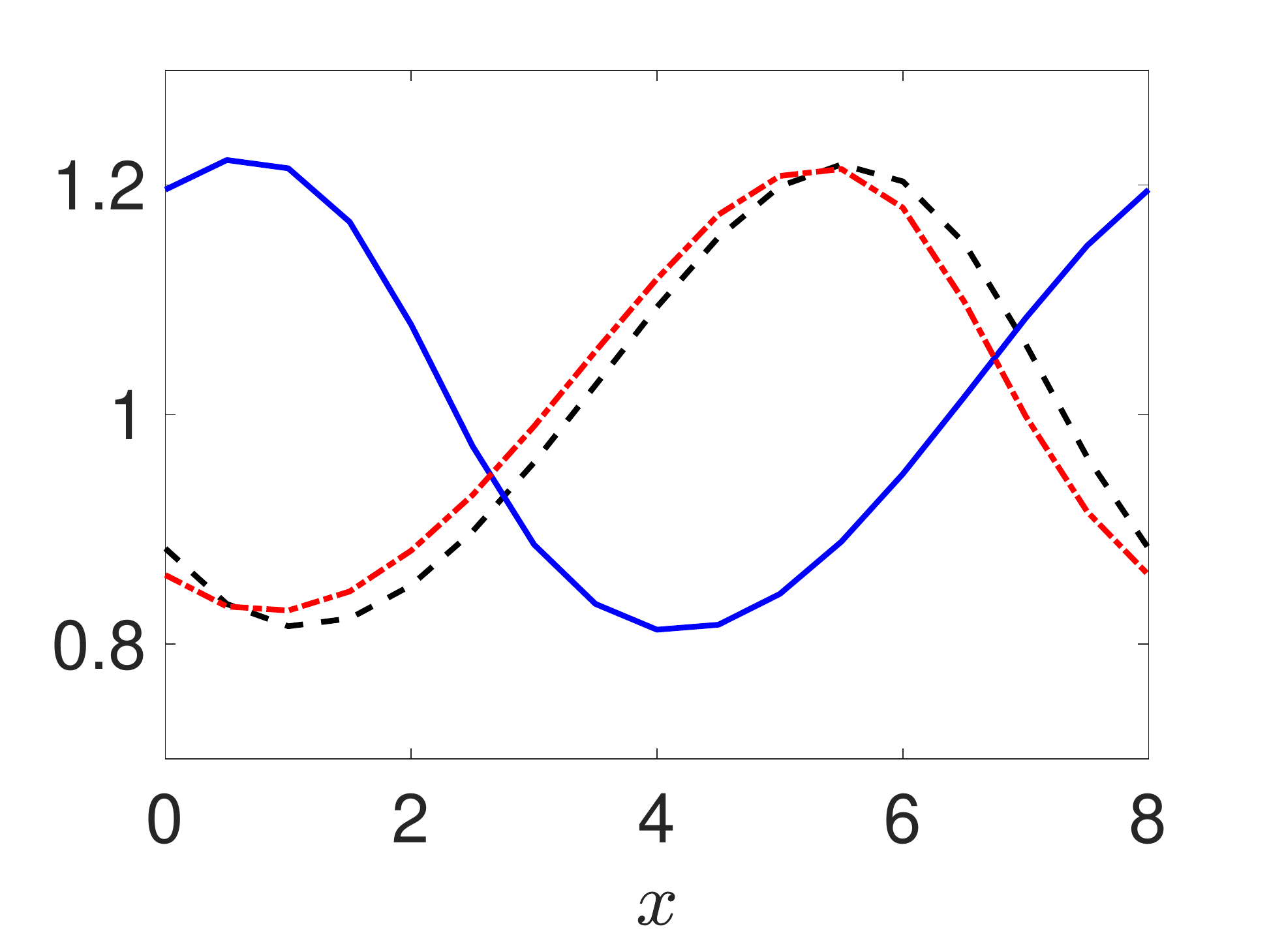}
		\subcaption{}
		\label{tw1}
	\end{subfigure}
	\begin{subfigure}{0.32\textwidth}
		\centering
		\includegraphics[width=\textwidth]{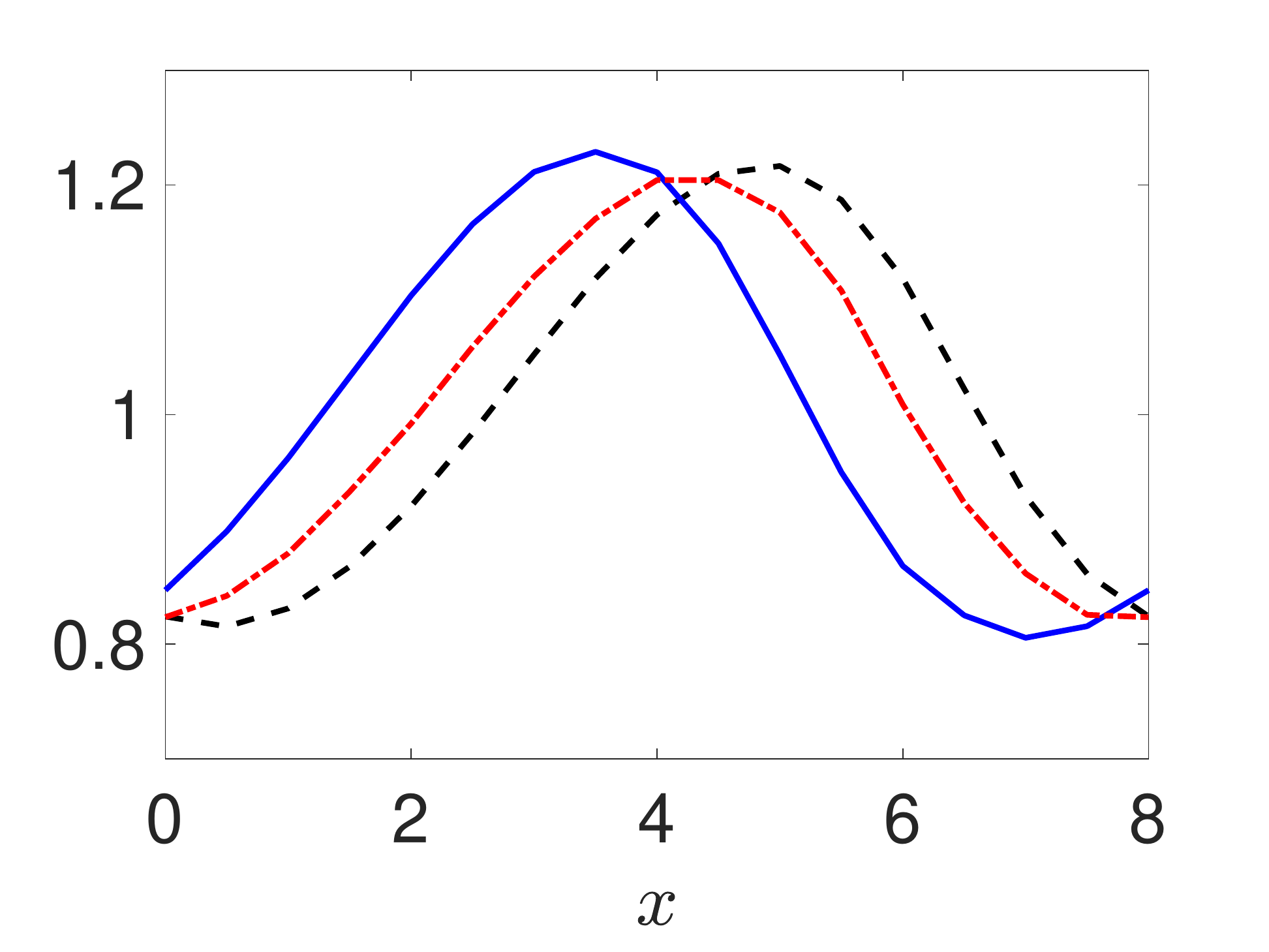}
		\subcaption{}
		\label{}
	\end{subfigure}
	\begin{subfigure}{0.32\textwidth}
		\centering
		\includegraphics[width=\textwidth]{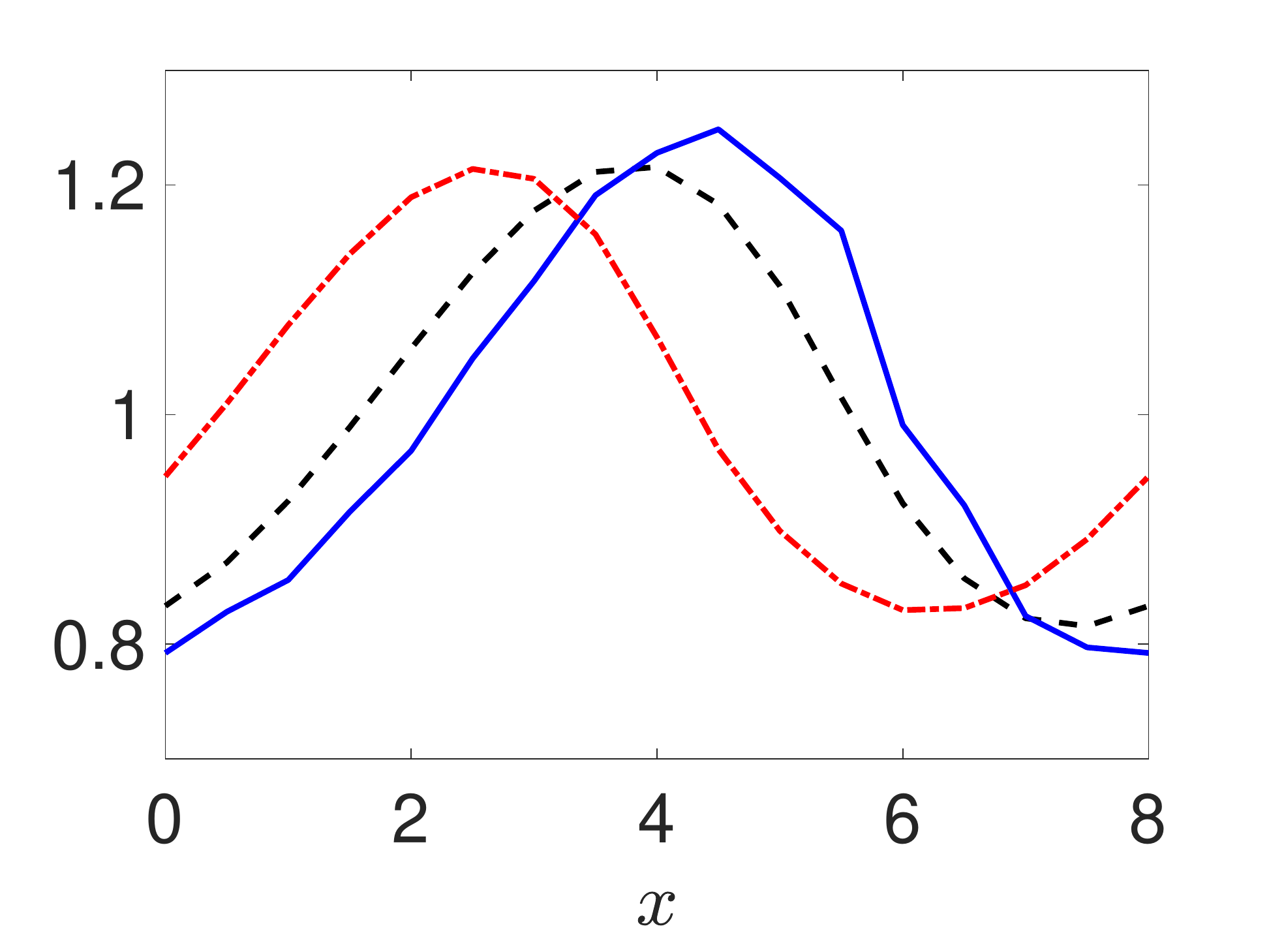}
		\subcaption{}
		\label{tw3}
	\end{subfigure}
	
	\begin{subfigure}{0.32\textwidth}
		\centering
		\includegraphics[width=\textwidth]{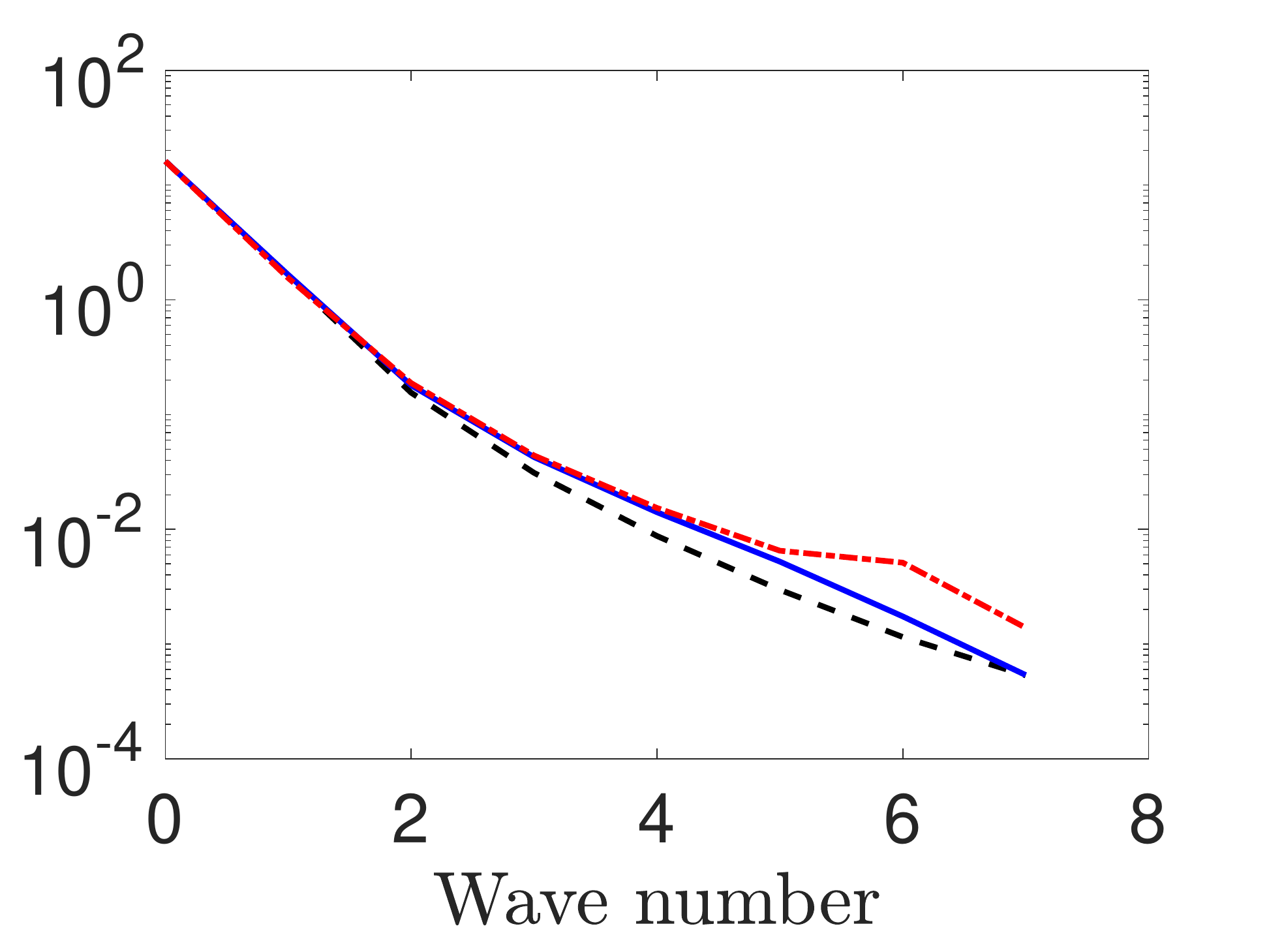}
		\subcaption{}
		\label{}
	\end{subfigure}
	\begin{subfigure}{0.32\textwidth}
		\centering
		\includegraphics[width=\textwidth]{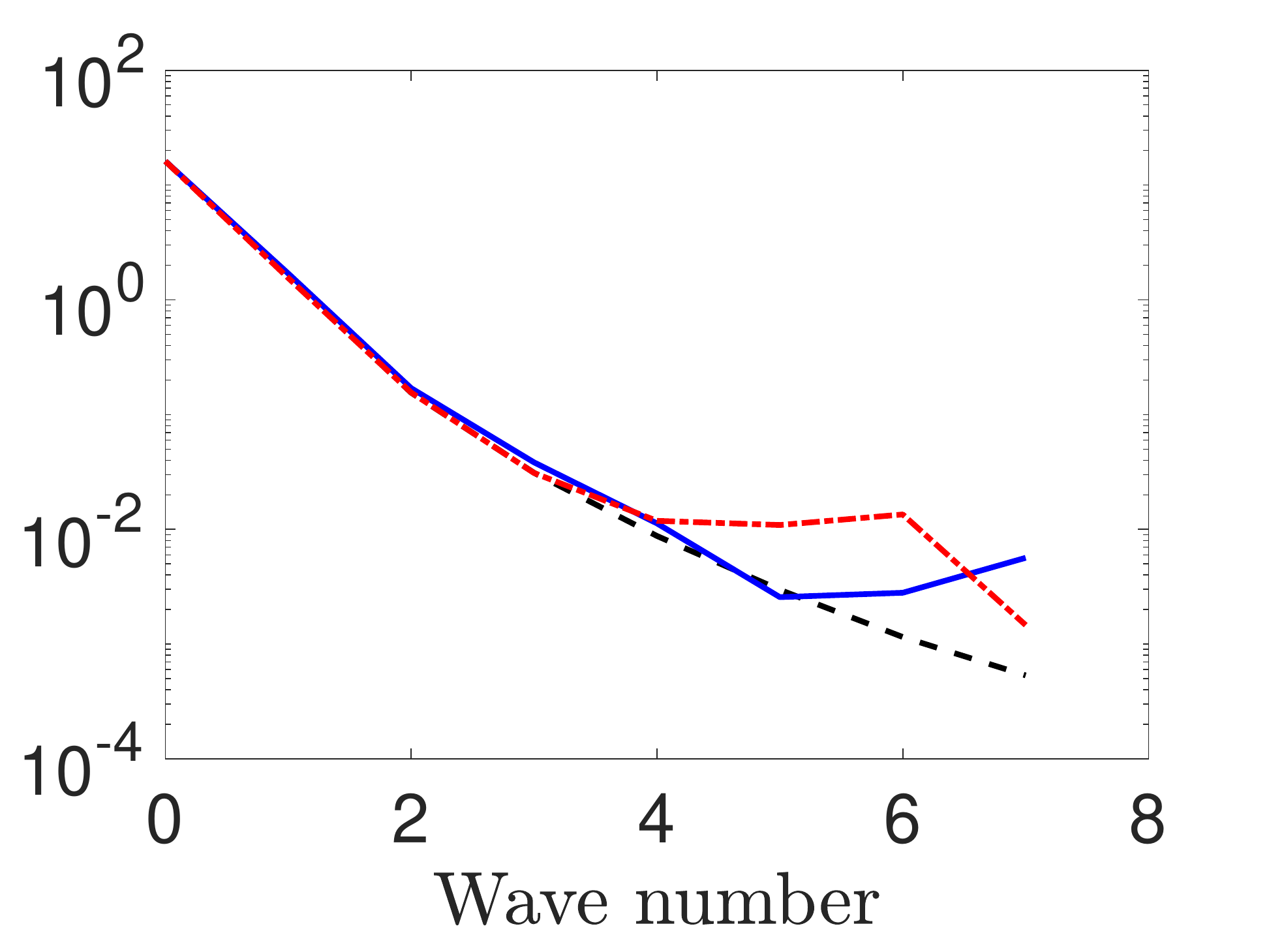}
		\subcaption{}
		\label{}
	\end{subfigure}
	\begin{subfigure}{0.32\textwidth}
		\centering
		\includegraphics[width=\textwidth]{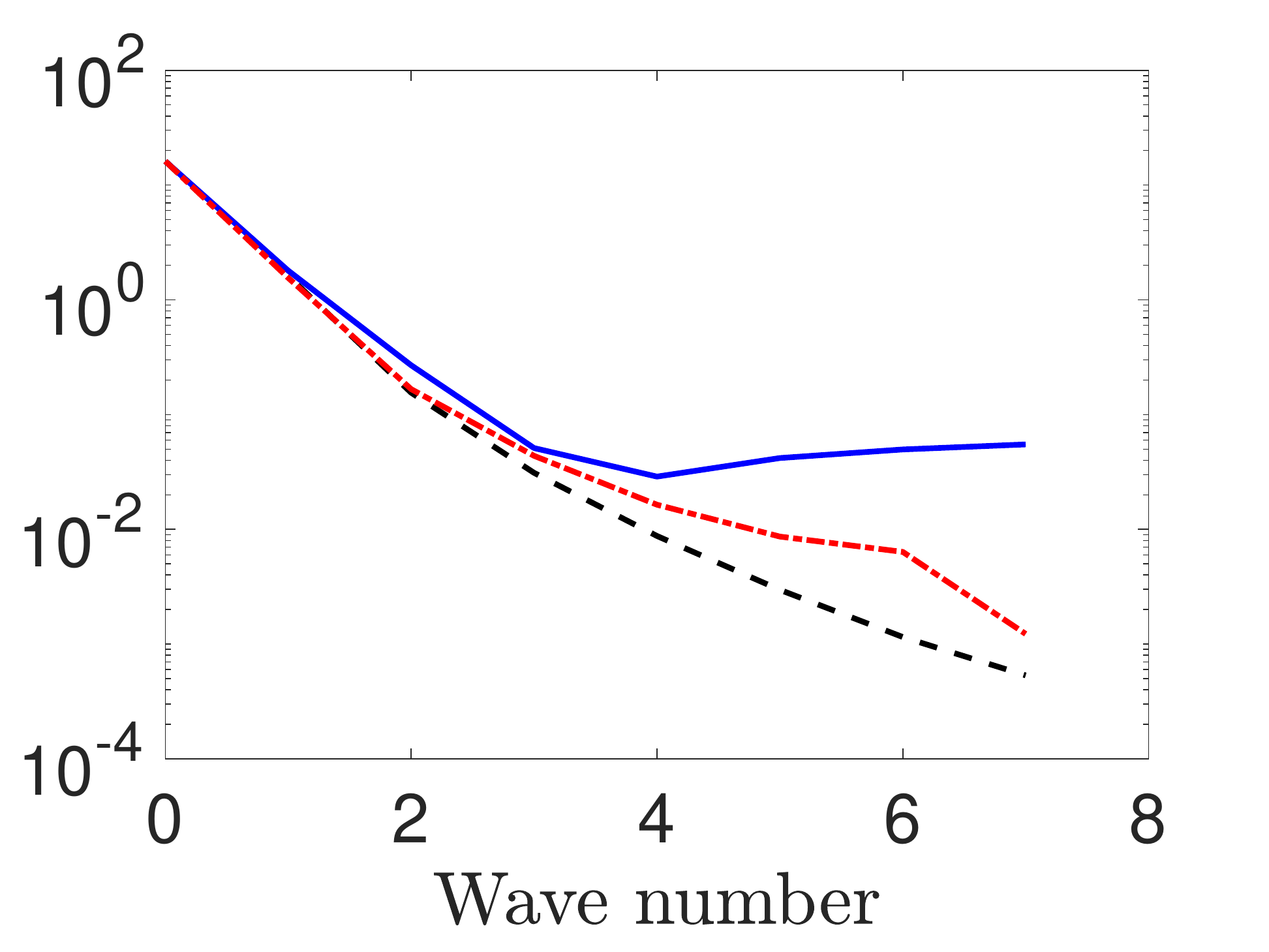}
		\subcaption{}
		\label{}
	\end{subfigure}
	\caption{Travelling wave solutions of the perturbed Burgers' equation (top row) and the positive Fourier modes (bottom row) at $t=109$ (left column), $t=218$ (middle column) and $t=437$ (right column). The plots correspond to the conventional method (\ConvLine), collective method (\CollLine) and the exact travelling wave solution (\RefLine). The grid parameters are $n_x = 16$, $\Delta x = 0.5$, $L = 8$ and $\Delta t = 2^{-6}$.}\label{fig:TW_P}
\end{figure}

\begin{figure}
	\centering
	\begin{subfigure}{0.32\textwidth}
		\centering
		\includegraphics[width=\textwidth]{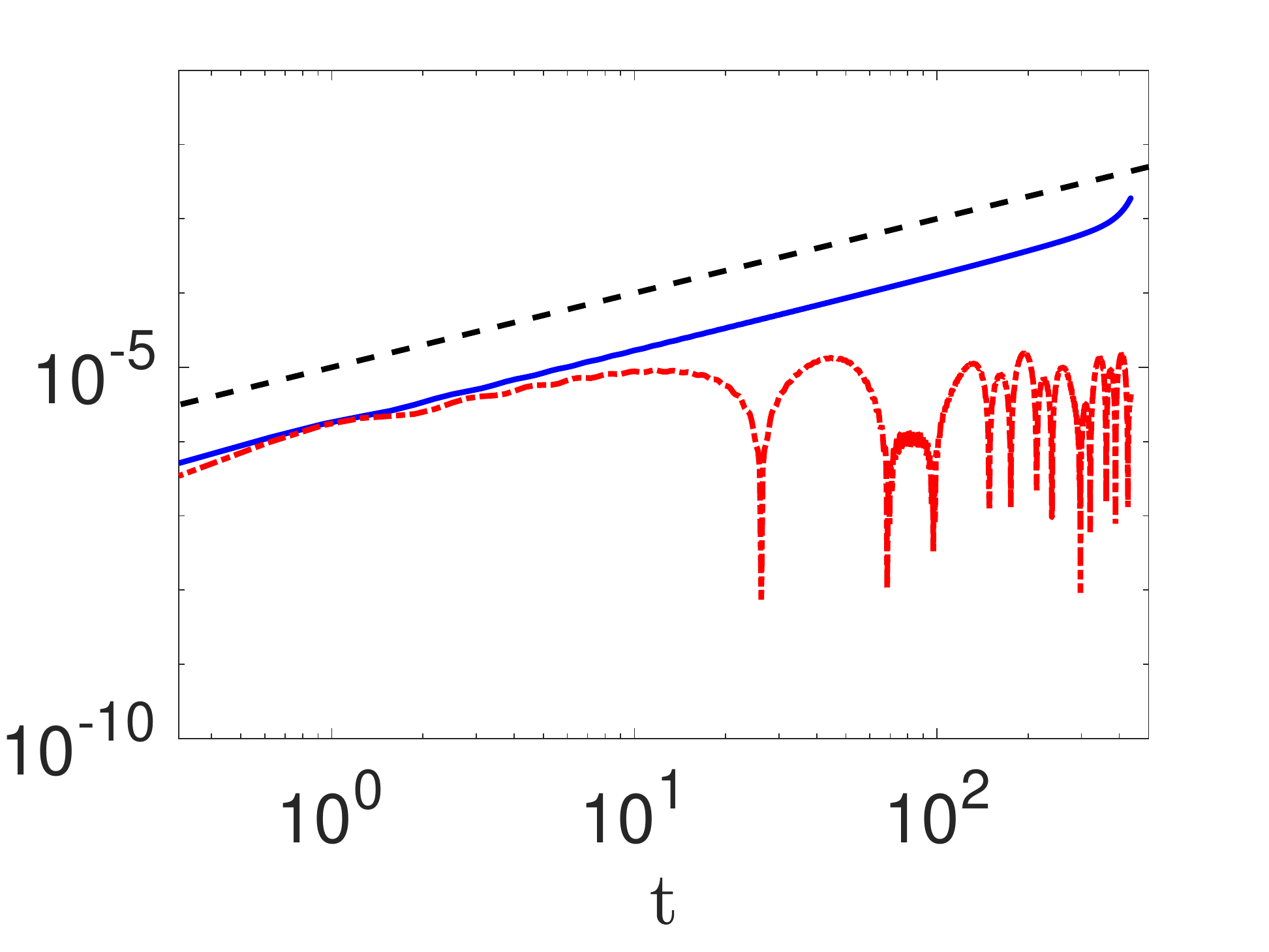}
		\subcaption{Casimir error}
		\label{fig:errors_TW_C}
	\end{subfigure}
	\begin{subfigure}{0.32\textwidth}
		\centering
		\includegraphics[width=\textwidth]{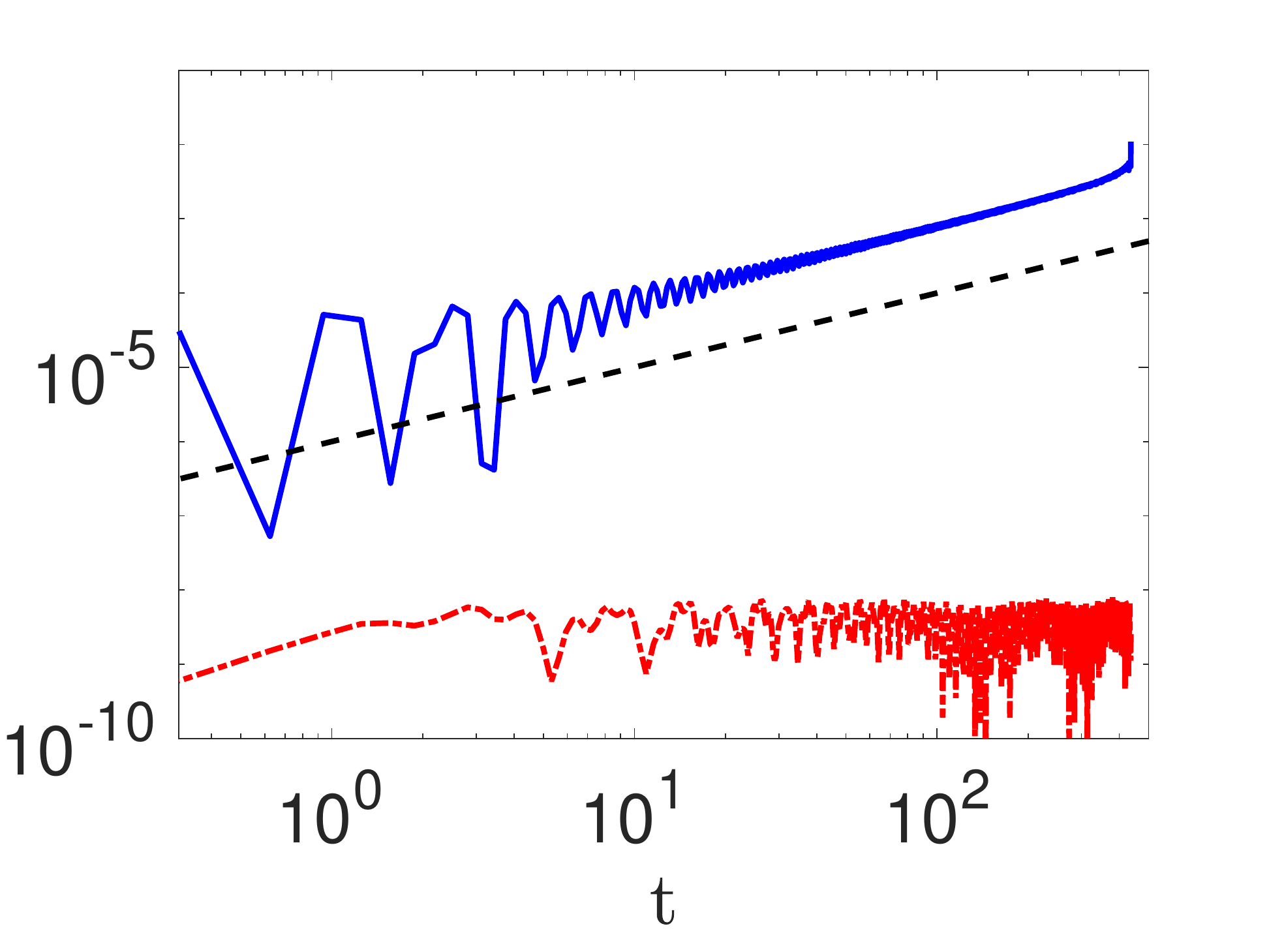}
		\subcaption{Hamiltonian error}
		\label{fig:errors_TW_H}
	\end{subfigure}	
	\begin{subfigure}{0.32\textwidth}
		\centering
		\includegraphics[width=\textwidth]{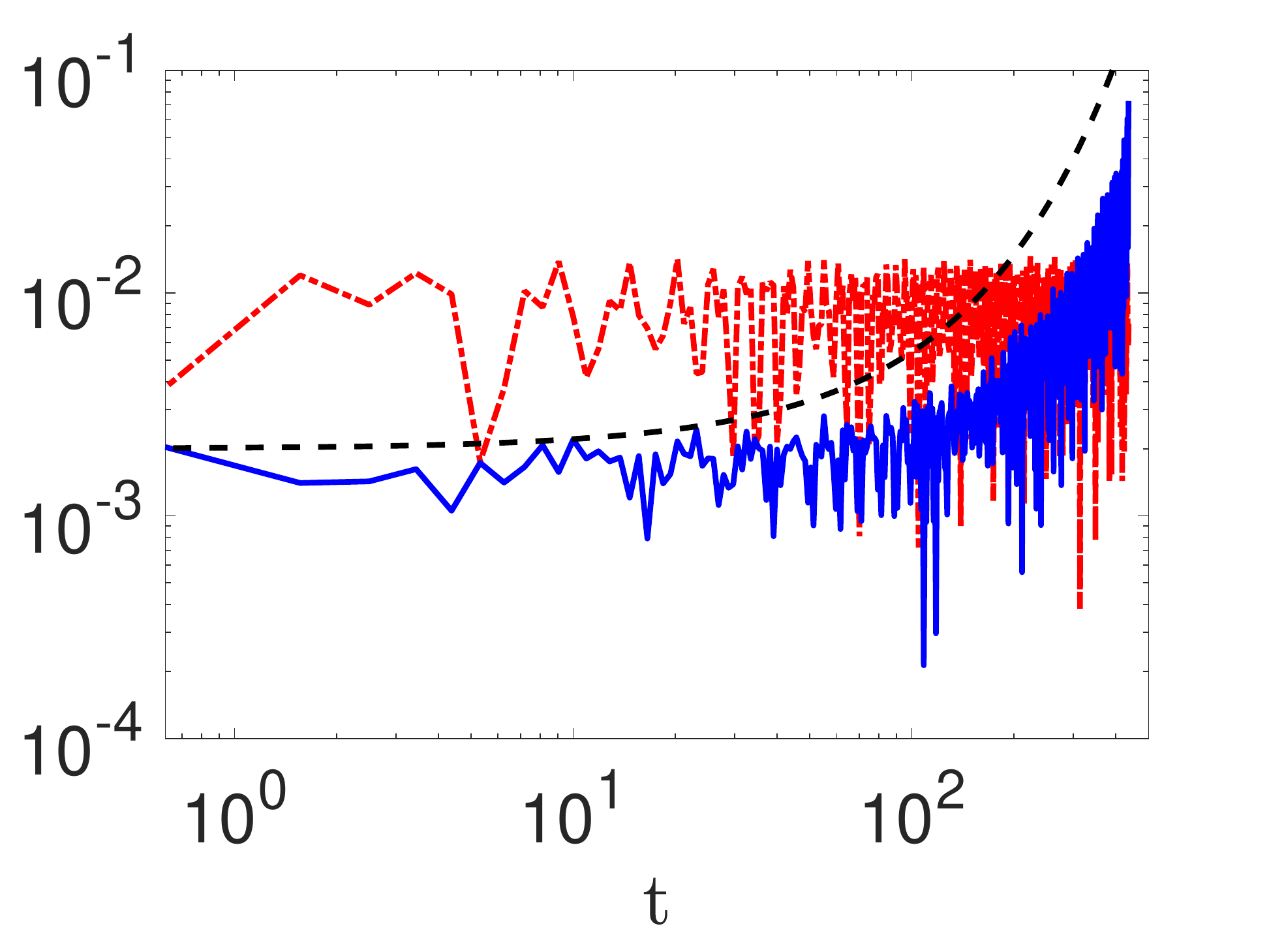}
		\caption{Highest frequency Fourier mode (increased vertical scale)}\label{fig:FM_P}
	\end{subfigure}
	\caption{The errors corresponding to the conventional (\ConvLine) and collective (\CollLine) methods for the travelling wave experiment. The reference lines (\RefLine) are $\mathcal{O}(t)$ in figures (a) and (b) and exponential in figure (c). }\label{fig:errors_TW}
\end{figure}

\subsubsection{Periodic bump solutions}
In this example, we model solutions to the extended Burgers' equation from the initial condition 
\begin{equation*}
u(x,0) = 1+ \frac{1}{2}\exp(-\sin^2(\frac{\pi x}{L})).
\end{equation*}
Figure \ref{fig:PB_P} shows snapshots of the solution and its positive Fourier modes and figure \ref{fig:errors_PB} shows the behaviour of the Casimir and Hamiltonian errors over time. Like the travelling wave example, we see that the high frequency modes of the conventional solution grow with time, which can be seen as rough wiggles in figure \ref{pb3}. The conventional solution has bounded Hamiltonian error, despite linear and exponential growth in the Casimir and highest frequency Fourier modes, respectively. In particular, the collective solution has excellent error behaviour, which appears to be bounded over the simulation period for all three plots of figure \ref{fig:errors_PB}. 
\begin{figure}
	\centering
	\begin{subfigure}{0.32\textwidth}
		\centering
		\includegraphics[width=\textwidth]{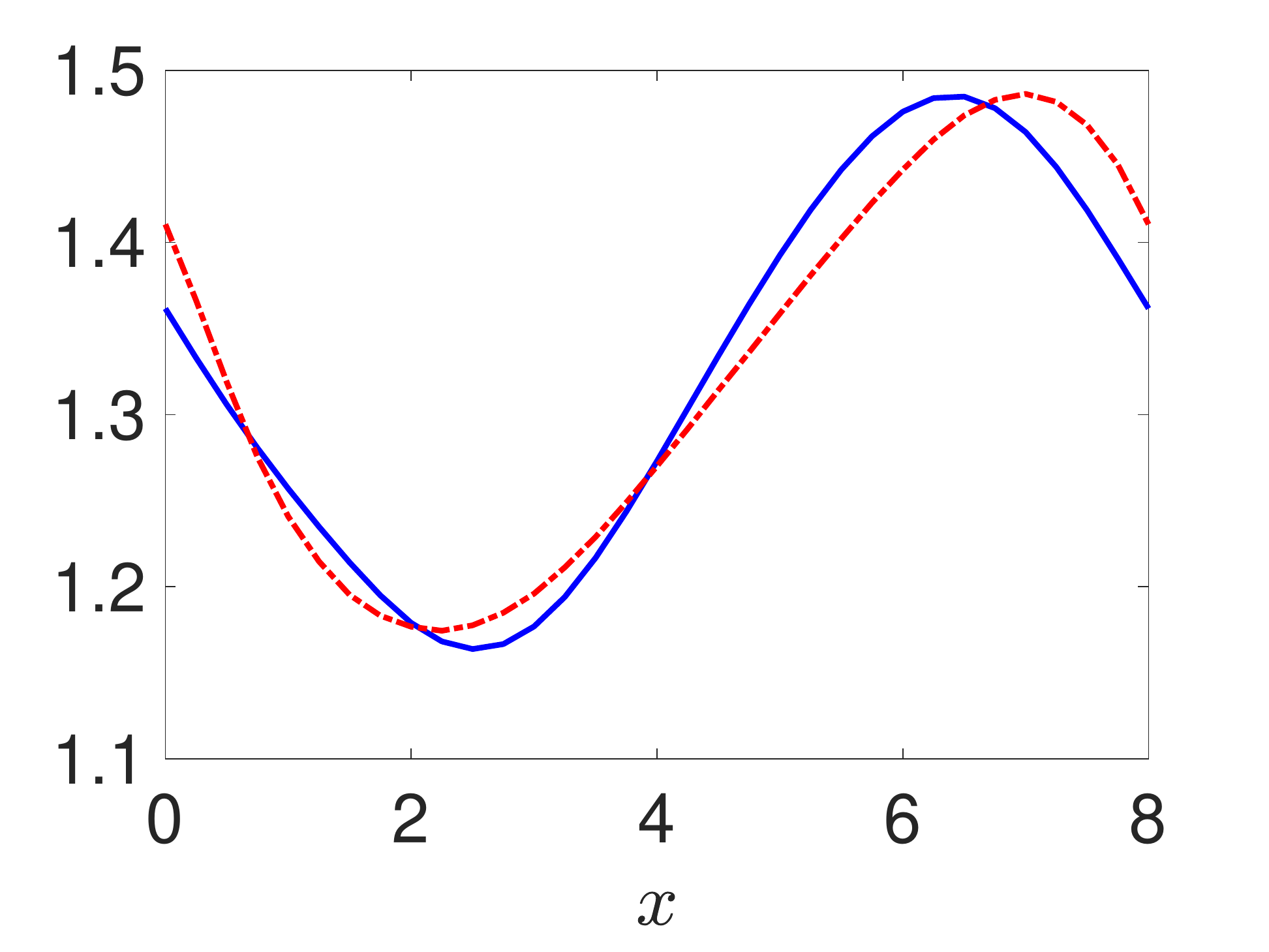}
		\subcaption{}
		\label{}
	\end{subfigure}
	\begin{subfigure}{0.32\textwidth}
		\centering
		\includegraphics[width=\textwidth]{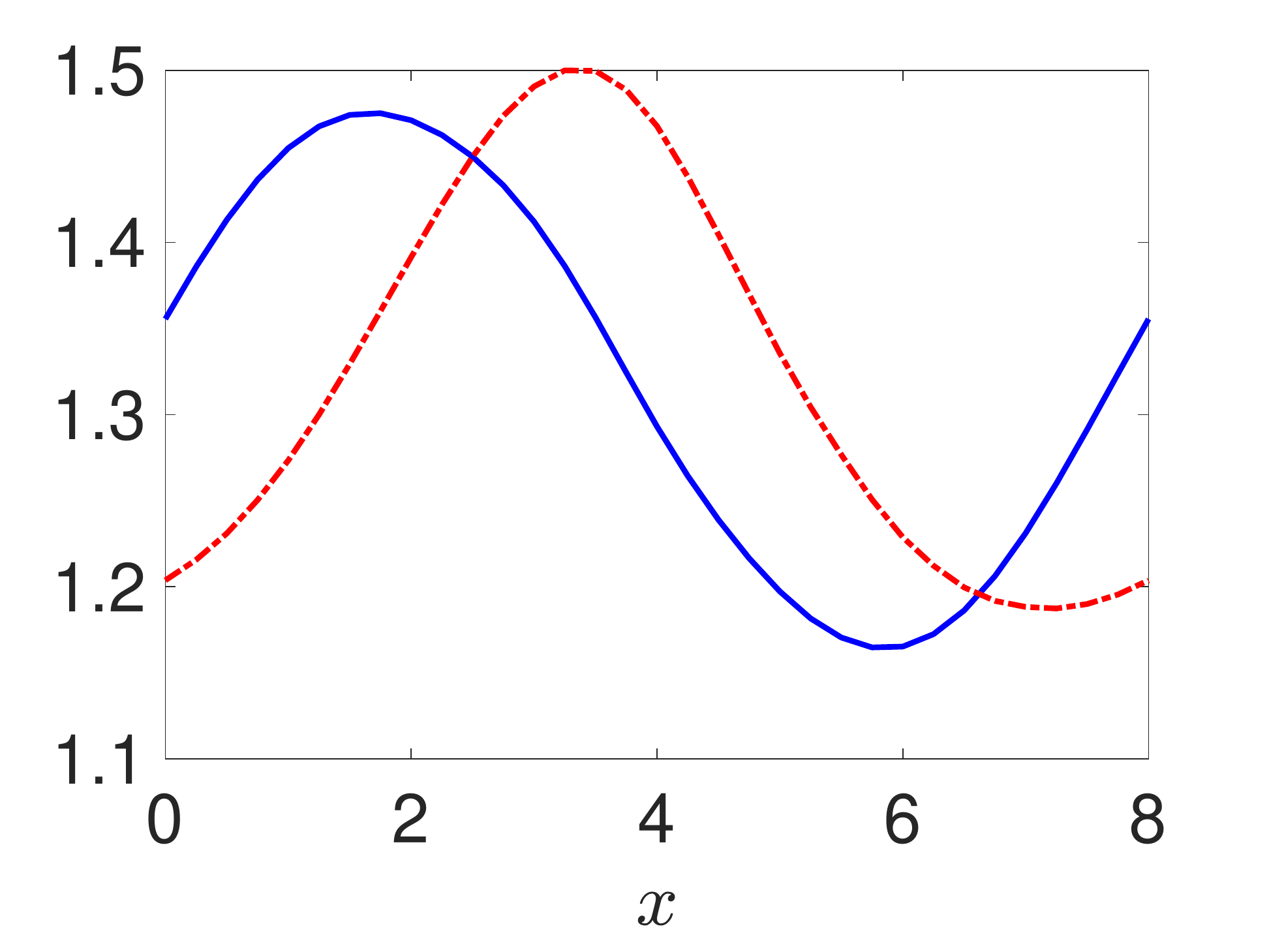}
		\subcaption{}
		\label{}
	\end{subfigure}
	\begin{subfigure}{0.32\textwidth}
		\centering
		\includegraphics[width=\textwidth]{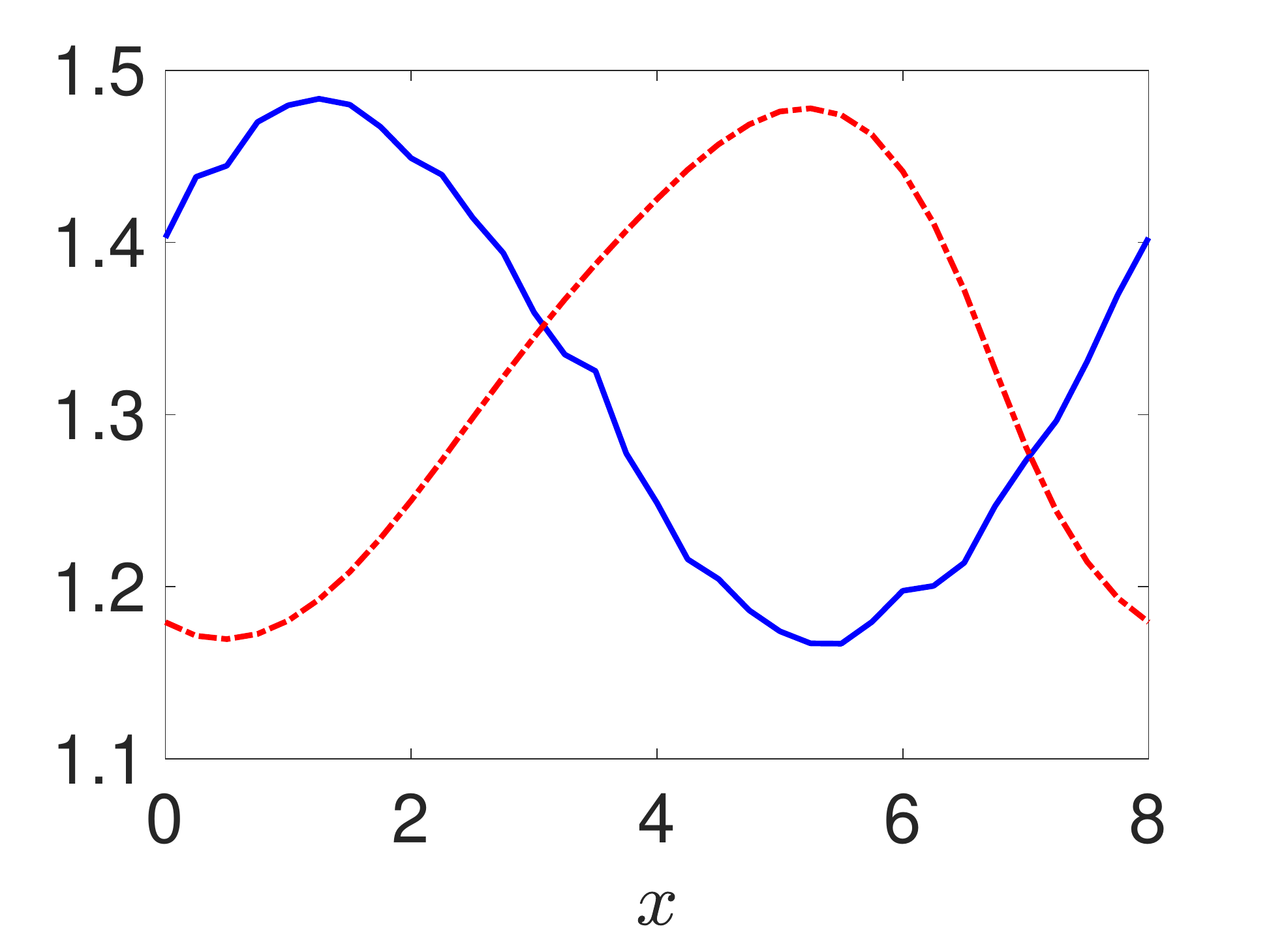}
		\subcaption{}
		\label{pb3}
	\end{subfigure}
	
	\begin{subfigure}{0.32\textwidth}
		\centering
		\includegraphics[width=\textwidth]{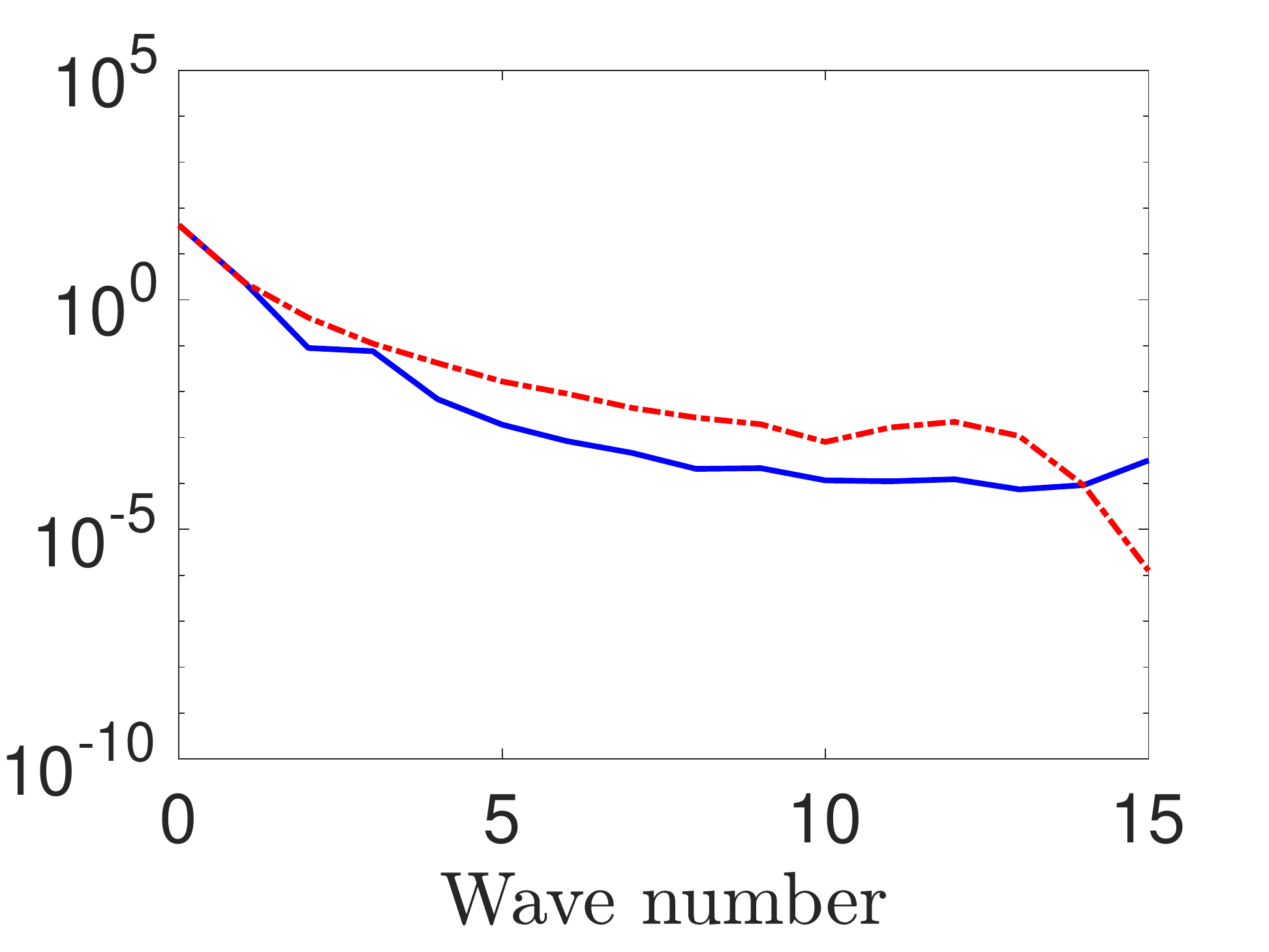}
		\subcaption{}
		\label{}
	\end{subfigure}
	\begin{subfigure}{0.32\textwidth}
		\centering
		\includegraphics[width=\textwidth]{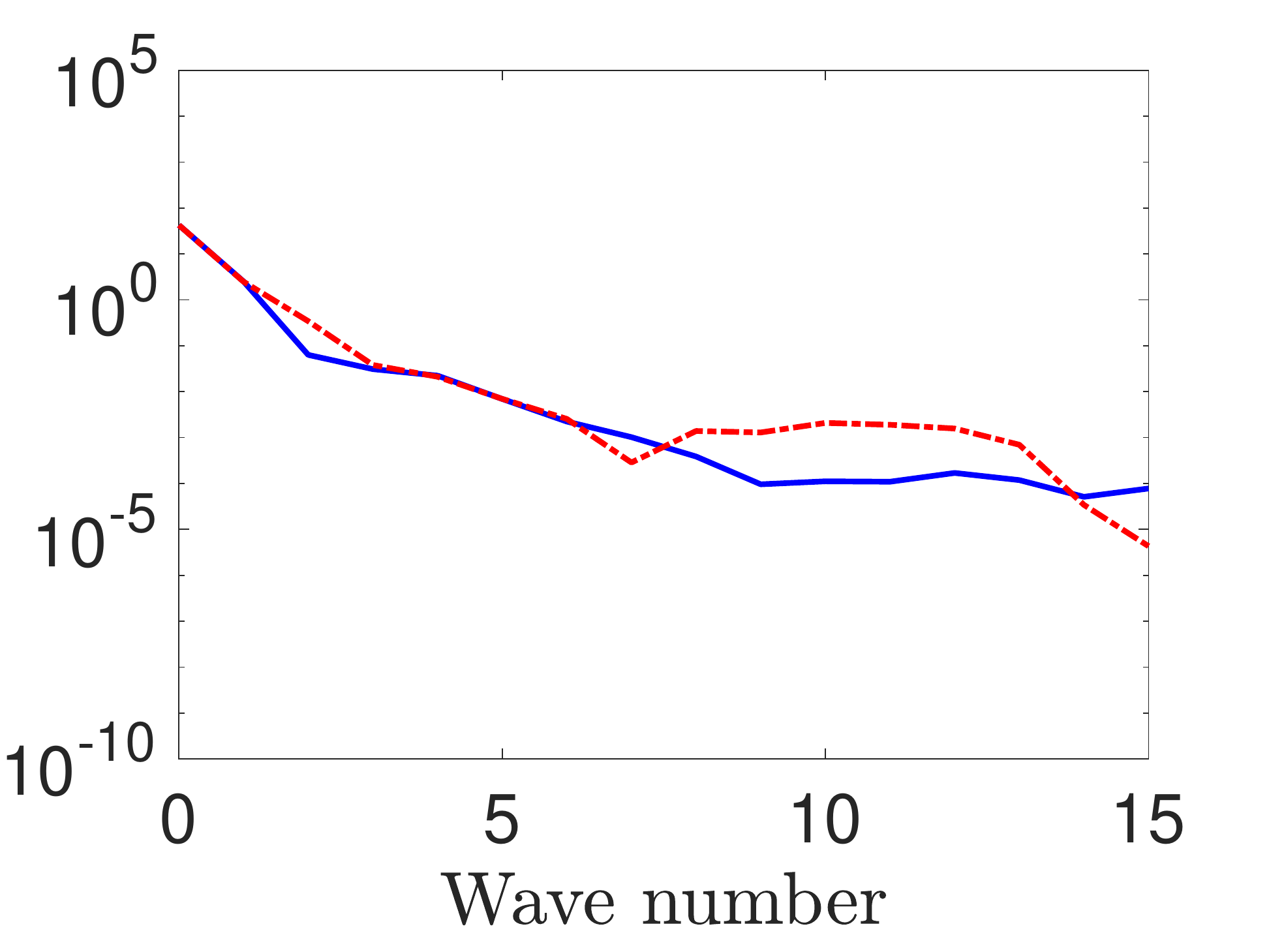}
		\subcaption{}
		\label{}
	\end{subfigure}
	\begin{subfigure}{0.32\textwidth}
		\centering
		\includegraphics[width=\textwidth]{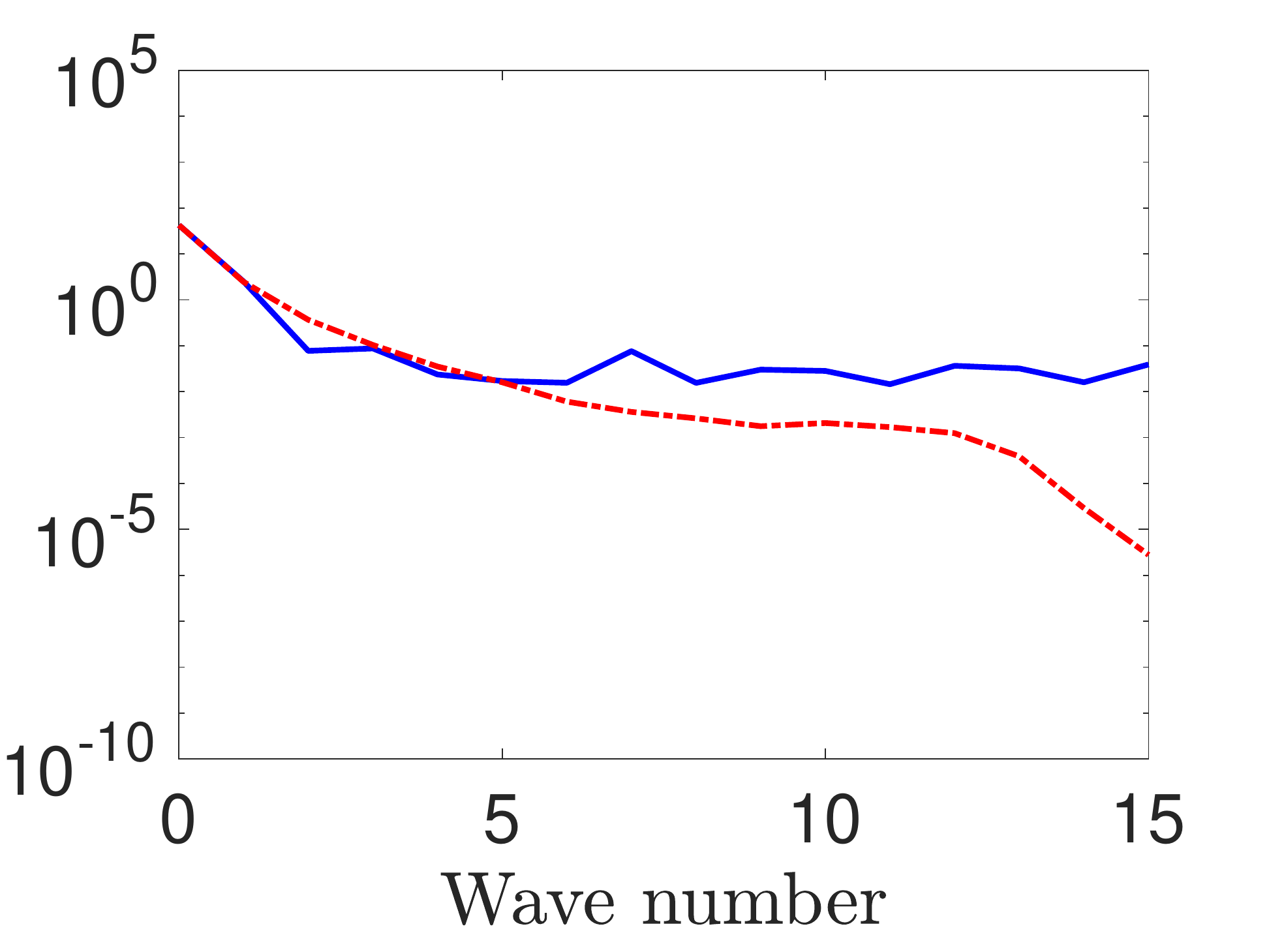}
		\subcaption{}
		\label{}
	\end{subfigure}	
	\caption{ Periodic bump solutions of the extended Burgers' equation (top row) and the positive Fourier modes (bottom row) at $t=10$ (left column), $t=100$ (middle column) and $t=1000$ (right column). The plots correspond to the conventional method (\ConvLine) and the collective method (\CollLine). The grid parameters are $n_x = 32$, $\Delta x = 0.25$, $L = 8$ and $\Delta t = 2^{-8}$.}\label{fig:PB_P}
\end{figure}
\begin{figure}
	\centering
	\begin{subfigure}{0.32\textwidth}
		\centering
		\includegraphics[width=\textwidth]{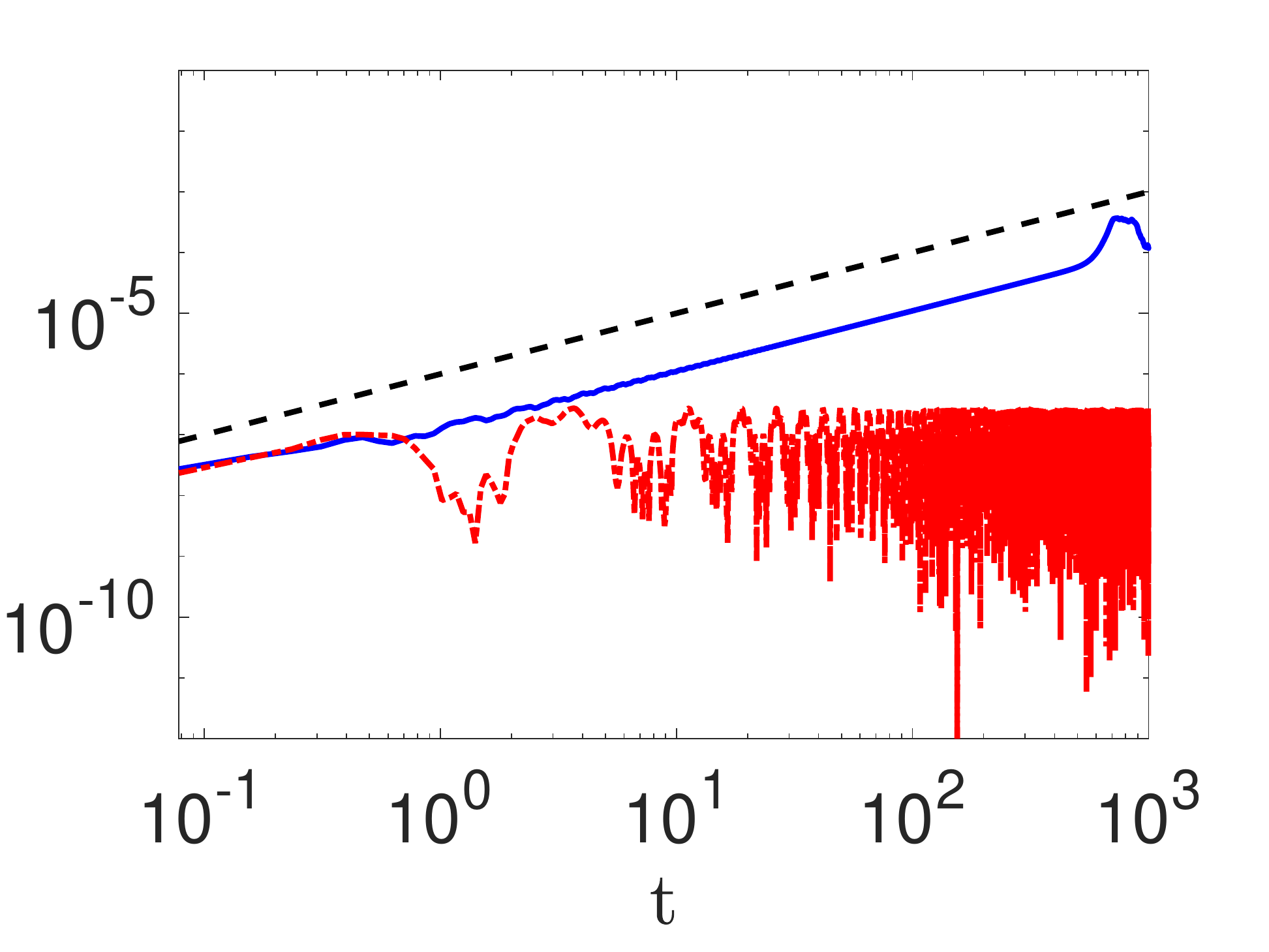}
		\subcaption{Casimir error}
		\label{}
	\end{subfigure}
	\begin{subfigure}{0.32\textwidth}
		\centering
		\includegraphics[width=\textwidth]{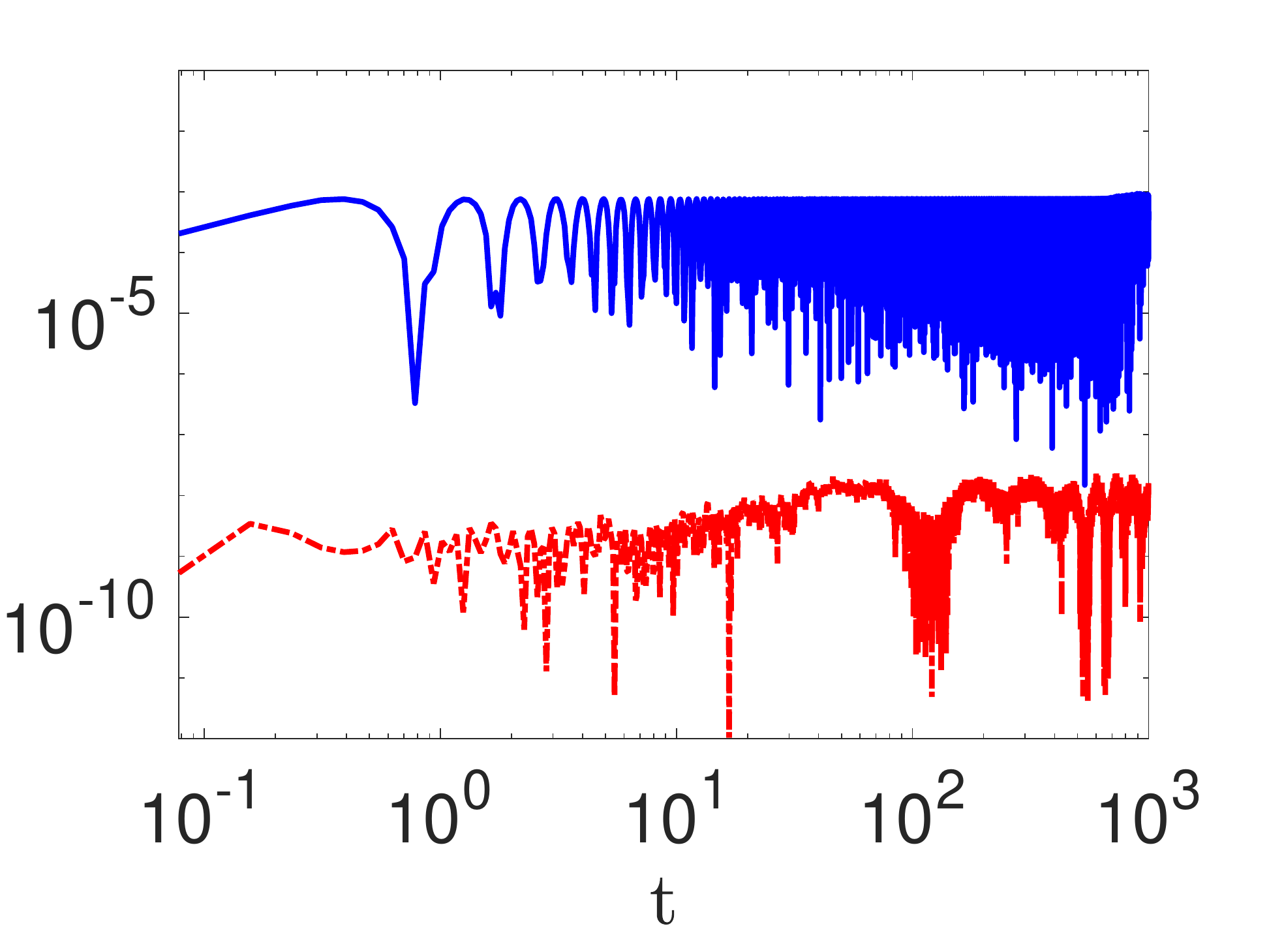}
		\subcaption{Hamiltonian error}
		\label{}
	\end{subfigure}
	\begin{subfigure}{0.32\textwidth}
		\centering
		\includegraphics[width=\textwidth]{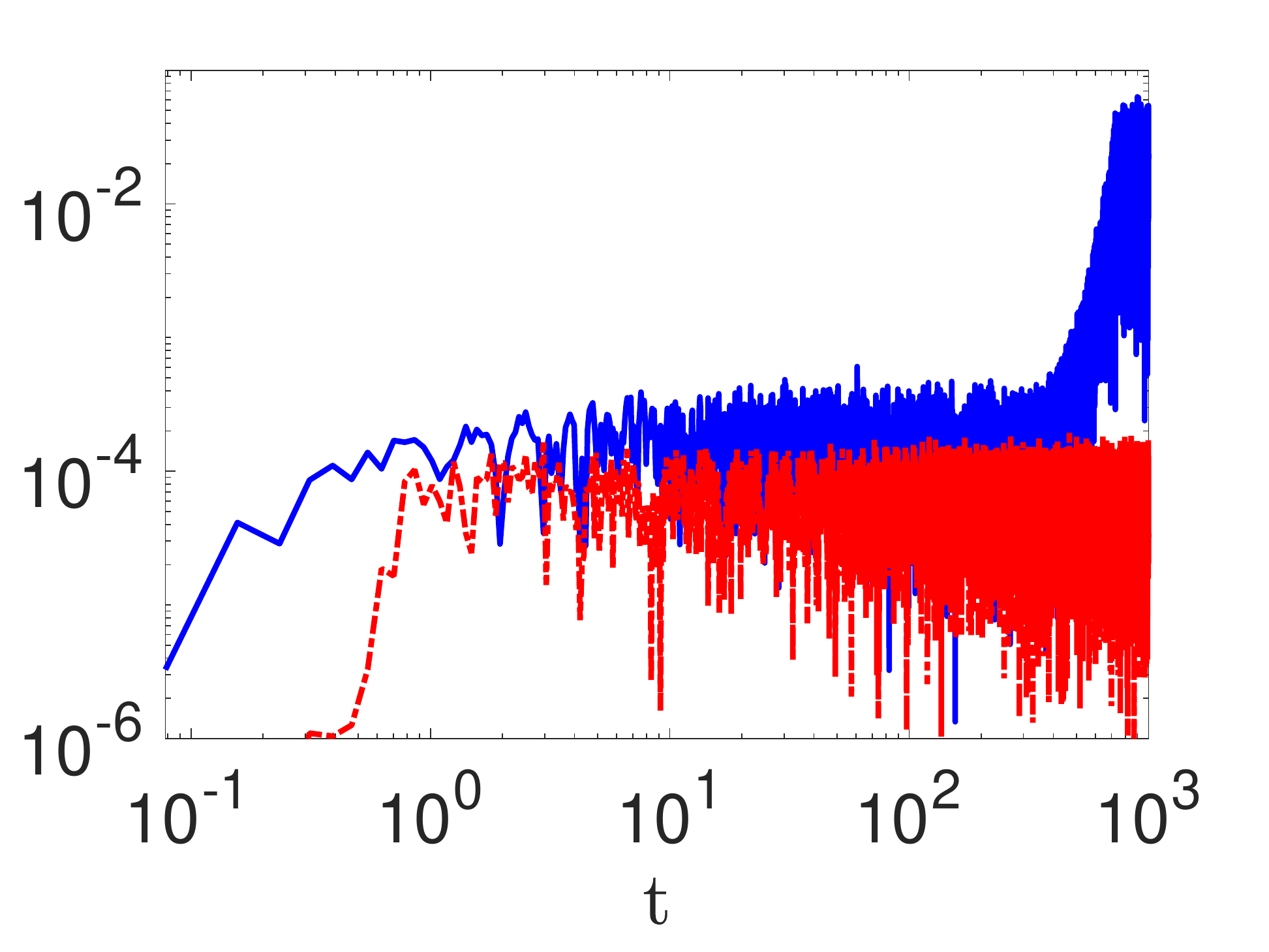}
		\caption{Highest Fourier frequency mode (increased vertical scale)}\label{fig:PB_FM_P}
	\end{subfigure}
	\caption{The errors corresponding to the conventional (\ConvLine) and collective (\CollLine) methods for the periodic bump example. The reference line (\RefLine) in figure (a) is $\mathcal{O}(t)$.}\label{fig:errors_PB}
\end{figure}

\section{Conclusion}

We have demonstrated that Hamiltonian PDEs on Poisson manifolds can be integrated while maintaining the structure preserving properties of Poisson systems very well. This is achieved by
\begin{enumerate}
\item
realising the Poisson-Hamiltonian system as an infinite-dimensional, collective Hamiltonian system on a symplectic manifold and lifting the initial condition from the Poisson system to the collective system,
\item
discretising the collective system in space to obtain a system of Hamiltonian ODEs, and
\item
using a symplectic integrator to solve the system.
\end{enumerate}

The symplectic integrator will, in general, fail to preserve the fibration provided by the realisation. Therefore, the presented integrators for Hamiltonian PDEs cannot be expected to conserve the Poisson structure {\em exactly}.
This is in contrast to the case of Hamiltonian ODEs on Poisson manifolds, where the fibres can be structurally simple for carefully chosen realisations and genuine Poisson integrators can be constructed. 
Regardless, in the ODE as well as in the PDE case the integrator is guaranteed to inherit the excellent energy behaviour from the symplectic integrator which is applied to the collective system. Moreover, our numerical examples for Hamitonian PDEs show excellent Casimir behaviour as well. Indeed, energy as well as Casimir errors are bounded in long term simulations.

Structure preserving properties of conventional numerical schemes typically rely on the presence of structurally simple symmetries of the differential equation. If the discretisation is invariant under the same symmetry as the equation, then the numerical solution will share all geometric features of the exact solution which are due to the symmetry. 
The simple form of the symmetries, however, is immediately destroyed when higher order terms in the Hamiltonian are switched on. Although exact solutions still preserve the Hamiltonian, numerical solutions obtained using a traditional scheme fail to show a good energy behaviour. The advantage of the presented integration methods is that their excellent energy behaviour is guaranteed no matter how complicated the Hamiltonian is. Our numerical examples for the extended Burgers' equation demonstrate the importance of structure preservation: while growing energy errors of the conventional solution cause a blow up, there are no signs of instabilities for the collective solution.

\section*{Acknowledgements}
We thank Elena Celledoni and Brynjulf Owren for many useful discussions. This research was supported by the Marsden Fund of the Royal Society Te Ap\={a}rangi. The third author would like to acknowledge funding from the European Unions Horizon 2020 research and innovation programme under the Marie Sklodowska-Curie grant agreement (No. 691070).


\providecommand{\href}[2]{#2}
\providecommand{\arxiv}[1]{\href{http://arxiv.org/abs/#1}{arXiv:#1}}
\providecommand{\url}[1]{\texttt{#1}}
\providecommand{\urlprefix}{URL }

\end{document}